\author{Andreas Klein}
\title{Symplectic Spinors, Holonomy and Maslov Index}
\newtheorem{theorem}{Theorem}[section]
\newtheorem{Def}[theorem]{Definition}
\newtheorem{prop}[theorem]{Proposition}
\newtheorem{lemma}[theorem]{Lemma}
\newtheorem{folg}[theorem]{Corollary}
\begin{document}
\maketitle
\begin{abstract}
In this note it is shown that the Maslov Index for pairs of Lagrangian Paths as introduced by Cappell, Lee and Miller appears by parallel transporting elements of (a certain complex line-subbundle of) the symplectic spinorbundle over Euclidean space, when pulled back to an (embedded) Lagrangian submanifold $L$, along closed or non-closed paths therein. In especially, the CLM-Index mod $4$ determines the holonomy group of this line bundle w.r.t. the Levi-Civita-connection on $L$, hence its vanishing mod $4$ is equivalent to the existence of a trivializing parallel section. Moreover, it is shown that the CLM-Index determines parallel transport in that line-bundle along arbitrary paths when compared to the parallel transport w.r.t. to the canonical flat connection of Euclidean space, if the Lagrangian tangent planes at the endpoints either coincide or are orthogonal. This is derived from a result on parallel transport of certain elements of the dual spinorbundle along closed or endpoint-transversal paths.
\end{abstract}

\section{Introduction} The idea that (some kind of) Maslov Index is related to
the double covering of the symplectic group, called the metaplectic group and
to the notion of symplectic spinors has been implicit in the literature
for quite a long time, mainly in the context of geometric quantization (see
Guillemin/Sternberg \cite{guillemin}, Kostant \cite{kost} and Crumeyrolle \cite{cru2}).
More recent work of Gosson (\cite{gosson}), who gives an analytical definition
of a maslov index (mapping to $\mathbb{Z}_4$) on the metaplectic group using
its well-known Shale-Weil representation enlightened this area considerably.
Using this and, to get in touch with some common definition of Maslov index,
its link to the Maslov Index for pairs of Lagrangian Paths as discussed by
Cappell, Lee and Miller in their well known paper \cite{CLM}, the announced
result is little more than `piecing the edges together'.
To give a brief outline of the argument, let $(V,\omega)$ be a fixed
(finite dimensional) symplectic vectorspace and let $Lag(V)$ be the space of
Lagrangian subspaces in $V$. To a continuous and piecewise
smooth path $f(t)=(L_1(t),L_2(t)), t \in [a,b]$ in $Lag(V)\times Lag(V)$ let
the 
\[
{\rm Maslov \ index} \quad \mu_{V,CLM}(f)
\] 
be the integer invariant associated to $f$ following \cite{CLM}, from now on
referred to as CLM-index. The CLM-index is characterized by a set of axioms
which include homotopy invariance relative fixed endpoints. This is the reason
why there is an associated index $\mathcal{M}(x,y)$ for a pair $(x,y)$ in the
universal covering space $\pi:\widetilde{Lag}(V) \rightarrow Lag(V)$ of
$Lag(V)$. In fact, choose a path $\tilde{\gamma}:[0,1] \rightarrow
\widetilde{Lag}(V)$ such that
\[
\tilde{\gamma}(0)=x,\quad \tilde{\gamma}(1)=y.
\]
If $\gamma=\pi\tilde{\gamma}$ then for any Lagrangian $L_0$ in $V$ the integer 
\begin{equation}\label{CLMdef}
\mathcal{M}(L_0;x,y)=\mu_{V,CLM}([L_0],\gamma)
\end{equation}
where $[L_0]$ is the constant path, is well-defined. One chooses
$L_0=\gamma(1)$, so from now on we refer to
$\mathcal{M}(x,y)=\mathcal{M}([\gamma(1)];x,y)$ as the Maslov-index on pairs
of the universal covering space $\widetilde{Lag}(V)$. Now, as we shall see
below, the usual action of the symplectic group $Sp(V)$ of $V$ on $Lag(V)$ is
covered by an action of the universal covering group $\widetilde{Sp}(V)$ of
$Sp(V)$ on $\widetilde{Lag}(V)$ 
\[
\widetilde{Sp}(V) \times \widetilde{Lag}(V) \rightarrow \widetilde{Lag}(V).
\]
For a fixed Lagrangian $L \in Lag(V)$ one now chooses an element $\tilde{L} \in
\widetilde{Lag}(V)$ with $\pi(\tilde{L})=L$ and observes that
\[
m_{L}(\tilde{S})=\mathcal{M}(\tilde{S}\tilde{L},\tilde{L})+n
\]
where $\tilde{S} \in \widetilde{Sp}(V)$ does not depend on the choice of
$\tilde{L}$ ($n=\frac{dim(V)}{2}$ is convention). Hence the last expression
defines a $\mathbb{Z}$-valued mapping on $\widetilde{Sp}(V)$ associated to $L$.
Finally, since $\widetilde{Sp}(V)$ covers the metaplectic group $\rho: Mp(V)
\rightarrow Sp(V)$, say $\pi_2:\widetilde{Sp}(V)\rightarrow Mp(V)$, one defines
for $S \in Mp(V)$ \[
m_{L}(S)= m_{L}(\tilde{S}) \mod 4
\]
where $\pi_2(\tilde S)=S$ and shows that one gets a well defined mapping
$m_{L}:Mp(V)\rightarrow \mathbb{Z}_{4}$. Now, specializing to $(V,\omega)$ as
$(\mathbb{R}^{2n},\omega_{0})$, with $\omega_{0}$ the symplectic standard
structure on $R^{2n}$, Gosson (\cite{gosson}) shows, that for
$L_{0}= \{0\} \times \mathbb{R}^{n}$ one recovers the index $m_{L_{0}}$ on
$Mp(2n,\mathbb{R})$ using analytic properties of the Shale-Weil-representation
of the metaplectic group, this will be the key to our proof.\\
Given a Lagrangian embedding in $(\mathbb{R}^{2n},\omega_{0})$, that is a
manifold $L$ with ${\rm dim}(L)=n$ and an embedding $i:L \rightarrow
\mathbb{R}^{2n}$ with $i^{*}\omega_0=0$, we will look at the pullback
$i^{*}\mathcal{Q}_{0}$ of a certain complex one-dimensional subbundle
$\mathcal{Q}_{0}$ of $\mathcal{Q}$, the symplectic spinorbundle over
$(\mathbb{R}^{2n},\omega_{0})$, to $L$. Further we will consider the parallel transport
$\mathcal{P}^{\nabla^{g}}$ in $i^*\mathcal{Q}_{0}$ induced by the
Levi-Civita connection $\nabla^{g}$ of the Riemannian metric $g$ on $L$ which
makes $i$ isometric relative to the standard metric on $\mathbb{R}^{2n}$. It is known that 
\begin{equation}\label{root}
i^*\mathcal{Q}_{0}\otimes i^*\mathcal{Q}_{0} \simeq i^*\Lambda^{-1}, 
\end{equation}
where $i^*\Lambda$ denotes the canonical bundle on $\mathbb{R}^{2n}$, pulled back to $L$. We then have the following result.
\begin{theorem}\label{theorem1}
For smooth closed paths $\gamma: [0,1] \rightarrow L$ based at $x \in L$ we have 
\begin{equation}\label{theorem}
\mathcal{P}^{\nabla^{g}}_{\gamma}\varphi=e^{i\frac{\pi}{2}\mu_{CLM}([i_{*}T_{x}L],[i_{*}\gamma])}\varphi, 
\end{equation}
if $\varphi \in (\mathcal{Q}_{0})_x$, where $[i_{*}T_{x}L]$ is the
corresponding constant path and $[i_*\gamma]$ is the path $t \mapsto i_{*}(T_{\gamma(t)}L)$ in $Lag(\mathbb{R}^{2n})$. Consequently, for the holonomy group ${\rm Hol}^{\nabla^g}(i^*Q_0)$ we have ${\rm Hol}^{\nabla^g}(i^*Q_0)\subset \mathbb{Z}_4$.
\end{theorem}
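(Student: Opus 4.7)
The plan is to realize $\mathcal{P}^{\nabla^g}_\gamma$ as the Shale--Weil action of a specific element $S_\gamma\in Mp(V)$ attached to the loop of Lagrangian tangent planes of $L$, then to evaluate Gosson's index $m_{L_0}$ on $S_\gamma$, and finally to identify that value with the CLM index by invoking (\ref{CLMdef}) and the characterizing axioms of $\mu_{V,CLM}$. Throughout set $V=\mathbb{R}^{2n}$ and $L_0=i_*T_xL$.

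The starting observation is that $\mathcal{Q}$ is associated, via the Shale--Weil representation on $L^2(\mathbb{R}^n)$, to the (trivial) $Mp(V)$-principal bundle over $\mathbb{R}^{2n}$, and that the ambient flat connection on $\mathbb{R}^{2n}$ induces the canonical flat connection on $\mathcal{Q}$. Since the Levi-Civita connection on $L$ is determined by how the tangent frames of $L$ rotate inside $\mathbb{R}^{2n}$, its induced parallel transport on $i^*\mathcal{Q}$ is the Shale--Weil action of a well-defined element $S_\gamma\in Mp(V)$, whose dependence on $\gamma$ factors through the Lagrangian loop $t\mapsto i_*T_{\gamma(t)}L$ in $Lag(V)$.

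Using that $\widetilde{Sp}(V)$ covers $Mp(V)$ via $\pi_2$, pick the distinguished lift $\widetilde{S}_\gamma\in\widetilde{Sp}(V)$ of $S_\gamma$ characterized by the requirement that $\widetilde{S}_\gamma\,\tilde L_0$ is the endpoint of the lift of $[i_*\gamma]$ starting at a chosen $\tilde L_0\in\pi^{-1}(L_0)$. By homotopy invariance of the CLM index together with (\ref{CLMdef}) one has $\mathcal{M}(\widetilde{S}_\gamma\tilde L_0,\tilde L_0)=\mu_{V,CLM}([L_0],[i_*\gamma])$. Combined with Gosson's formula $m_{L_0}(\widetilde{S})\equiv\mathcal{M}(\widetilde{S}\tilde L,\tilde L)+n\pmod 4$ this yields
\[
m_{L_0}(S_\gamma)\equiv \mu_{V,CLM}([L_0],[i_*\gamma])+n\pmod 4.
\]

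To finish, one shows that the distinguished line $(\mathcal{Q}_0)_x$ is invariant under the Shale--Weil action of $S_\gamma$ and that this action is multiplication by $e^{i\pi m_{L_0}(S_\gamma)/2}$. Here the defining property (\ref{root}) enters crucially: it realizes $\mathcal{Q}_0$ as a square root of the anticanonical bundle $\Lambda^{-1}$, so the scalar factor can be read off from the determinantal character of $Sp(V)$ on $\Lambda$, and this normalization simultaneously absorbs the additive shift by $n$ above. The resulting phase is $e^{i\pi\mu_{V,CLM}/2}$, proving (\ref{theorem}); the holonomy statement follows because all such phases are fourth roots of unity. The hardest step is this last one --- tracking how the specific choice of square root of the canonical bundle interacts with the metaplectic cover of $Lag(V)$ and with Gosson's analytic description of $m_{L_0}$ --- whereas the reduction to a metaplectic holonomy and the invocation of Gosson's formula are, given the framework recalled in the introduction, comparatively formal.
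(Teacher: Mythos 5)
Your overall plan---realize $\mathcal{P}^{\nabla^g}_\gamma$ as the Shale--Weil action of an element of $Mp(2n)$ attached to the loop of Lagrangian tangent planes, identify the associated phase with a Gosson-type index, and then translate to the CLM index---does match the paper's strategy in broad outline. But at the crucial step you diverge from the paper, and that step is where your argument has a genuine gap.

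First, the paper explicitly disavows the route you take: right after stating the theorem it remarks that (\ref{root}) would yield the holonomy of $i^*\mathcal{Q}_0$ via the mean-curvature form (Oh), and then says ``However, our proof does not use (\ref{root}) and instead derives the Theorem using the Maslov index $\hat\mu$ on $Mp(2n)$.'' Instead of reading the phase off from the determinantal character on $\Lambda^{-1/2}$, the paper exploits the $O(n)$-reduction (Lemma \ref{onred}): since $\gamma$ is closed, the Levi-Civita holonomy element $S(1)$ lies in the diagonal copy $\hat i(O(n))\subset U(n)$, and one computes directly from (\ref{metalinear}) that $\kappa(\hat S(1))u=i^m u$ on the oscillator ground state $u(x)=e^{-(x,x)/2}$, with $m$ determined by the chosen root of $\det A$. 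The identification $m=\mu_{CLM}$ is then obtained by decomposing $\kappa(\hat S(1))$ as a product of two quadratic Fourier transforms $\hat S_{W,m'}\hat S_{W',n}$ as in (\ref{metalineardec})--(\ref{u0calc}), evaluating $\hat\mu(\hat S(1))=2m$ via Theorem \ref{sympdecomp}, and invoking the corollary (\ref{masloveq}) with $\dim(L(0)\cap L(1))=n$. No appeal to (\ref{root}) is required.

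Second, the step you yourself flag as hardest---showing that the Shale--Weil action on $(\mathcal{Q}_0)_x$ is multiplication by $e^{i\pi m_{L_0}(S_\gamma)/2}$ and that ``this normalization simultaneously absorbs the additive shift by $n$''---is asserted, not proved, and as written the bookkeeping does not close. You derive $m_{L_0}(S_\gamma)\equiv\mu_{CLM}+n\ (\mathrm{mod}\ 4)$; if the phase were $e^{i\pi m_{L_0}(S_\gamma)/2}$ it would equal $e^{i\pi(\mu_{CLM}+n)/2}$, which carries an extra factor $i^n$ compared to (\ref{theorem}). Removing that factor is exactly the content the paper supplies through the concrete evaluation of the quadratic Fourier transforms, where the contribution from the two $\hat J$-factors ($i^{n-n}=1$) and the convention $\hat\mu(\hat S_{W,m})=2m-n$ conspire to cancel the shift, together with the vanishing of $n-\dim(L(0)\cap L(1))$ in (\ref{masloveq}) for closed loops. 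In your sketch this cancellation is a black box: you say (\ref{root}) plus the ``determinantal character of $Sp(V)$ on $\Lambda$'' handles it, but pinning down which square root of $\det$ appears, and how it compares to the mod-4 class of $m_{L_0}$, is precisely the nontrivial normalization that must be carried out. Until that computation is done, the proposed proof does not establish (\ref{theorem}).

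Finally, a smaller point: the reduction of the problem to a single metaplectic element $\hat S(1)$ requires, as the paper does explicitly, the $\nabla^0$-trivialization of $i^*P^J$ to compare fibres over $\gamma(0)$ and $\gamma(t)$, and it requires Lemma \ref{onred} to guarantee $S(1)\in O(n)$. You gesture at these facts (``the Levi-Civita connection...is determined by how the tangent frames of $L$ rotate inside $\mathbb{R}^{2n}$'') but they deserve to be spelled out, since without $S(1)\in O(n)$ one cannot even conclude that $(\mathcal{Q}_0)_x$ is preserved by the action.
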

Denote by ${\rm Par}^{\nabla^g}(i^*Q_0)\subset \Gamma(i^*\mathcal{Q}_0)$ the set of sections which are parallel w.r.t. $\nabla^g$, Theorem \ref{theorem1} implies:
\begin{folg}\label{cor1}
With the above notations we have ${\rm dim}_{\mathbb{C}}({\rm Par}^{\nabla^g})(i^*Q_0)=1$ if and only if $\mu_{CLM}([i_{*}T_{x}L],[i_{*}\gamma])=0 \mod 4$ for all $\gamma \in \pi_1(L)$.
\end{folg}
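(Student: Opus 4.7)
The plan is to reduce the corollary to Theorem \ref{theorem1} together with the standard holonomy principle for complex line bundles. Since $i^*\mathcal{Q}_0$ has complex rank one, the space $\mathrm{Par}^{\nabla^g}(i^*\mathcal{Q}_0)$ has complex dimension at most one (a parallel section of a line bundle is determined by its value at a single point). Hence the claim $\dim_{\mathbb{C}} \mathrm{Par}^{\nabla^g}(i^*\mathcal{Q}_0)=1$ is equivalent to asserting the existence of a nonzero global parallel section, and I will prove the equivalence in that form.

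For the ``only if'' direction I would take a nonzero $\sigma\in \mathrm{Par}^{\nabla^g}(i^*\mathcal{Q}_0)$ and note that, on a line bundle, a parallel section either vanishes identically or nowhere, so $\varphi:=\sigma(x)\in(\mathcal{Q}_0)_x\setminus\{0\}$. For any smooth loop $\gamma$ based at $x$ the defining property $\mathcal{P}^{\nabla^g}_\gamma \sigma(x)=\sigma(x)$ together with \eqref{theorem} yields
\[
e^{i\frac{\pi}{2}\mu_{CLM}([i_*T_xL],[i_*\gamma])}\varphi=\varphi,
\]
so $\mu_{CLM}([i_*T_xL],[i_*\gamma])\equiv 0\pmod 4$. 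Because the CLM-index is homotopy invariant relative to fixed endpoints, this condition depends only on the class of $\gamma$ in $\pi_1(L,x)$, giving the stated conclusion for all $\gamma\in\pi_1(L)$.

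For the ``if'' direction I would construct a parallel section by parallel transport: fix a nonzero $\varphi\in(\mathcal{Q}_0)_x$ and set $\sigma(y):=\mathcal{P}^{\nabla^g}_\alpha\varphi$ for any piecewise smooth path $\alpha$ from $x$ to $y$. The single nontrivial point is well-definedness: given two such paths $\alpha,\beta$, the concatenation $\bar\beta\cdot\alpha$ is a loop at $x$, and $\mathcal{P}^{\nabla^g}_\beta\varphi=\mathcal{P}^{\nabla^g}_\alpha\varphi$ holds iff $\mathcal{P}^{\nabla^g}_{\bar\beta\cdot\alpha}\varphi=\varphi$. By Theorem \ref{theorem1} and the assumption this holonomy factor is $e^{i(\pi/2)\cdot 0}=1$, so $\sigma$ is well-defined, and it is parallel by construction.

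I expect the main (small) obstacle to be the passage between smooth loops, to which Theorem \ref{theorem1} is literally applied, and the piecewise smooth loops/paths needed to represent arbitrary classes in $\pi_1(L)$ and to build $\sigma$ pointwise. This is handled either by invoking the homotopy-invariance axiom of the CLM-index (which allows smoothing of corners without changing $\mu_{CLM}$) or by a direct approximation argument; once that is done the corollary is an immediate consequence of \eqref{theorem}.
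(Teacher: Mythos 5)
Your argument is correct and is essentially the same as the paper's: the paper phrases the holonomy principle in principal-bundle language, identifying $\mathrm{Par}^{\nabla^g}(i^*\mathcal{Q}_0)$ with $\{u\in\mathcal{M}_0:\kappa(\mathrm{Hol}_p(\overline Z))u=u\}$ and then using that $\kappa(\mathrm{Hol}_p(\overline Z))\subset\mathbb{Z}_4\subset U(1)$ acts by scalars on the one-dimensional $\mathcal{M}_0$, while you unwind the same equivalence directly by constructing the parallel section via parallel transport and checking well-definedness against Theorem \ref{theorem1}. Both reduce the corollary to the identity \eqref{theorem} plus homotopy invariance of $\mu_{CLM}$, so this is the same route written at a slightly more elementary level.
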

Note that from (\ref{root}) it follows that the holonomy of $i^*\mathcal{Q}_0$ is determined by a Maslov-Index (namely the value of the mean-curvature form of $L$ on $\gamma \in H_1(L,\mathbb{Z})$, see Oh \cite{oh}). However, our proof does not use (\ref{root}) and instead derives the Theorem using the Maslov index $\hat \mu$ on $Mp(2n)$. Furthermore, the approach shows that $\mu_{CLM}$ determines parallel transport in $i^*\mathcal{Q}$ w.r.t. $\nabla^g$ along non-closed paths in $L$ in an appropriate 'semi-classical limit'. To explain that, let $\nabla^0$ denote the connection on $i^*\mathcal{Q}$ induced by the canonical flat connection on $\mathbb{R}^{2n}$, extended to the dual spinor bundle $i^*\mathcal{Q}'$. Let $P_L$ be the $O(n)$-reduction of $i^*P$ 
which is induced by $L$, $P$ being the metaplectic structure of $(\mathbb{R}^{2n},\omega_0)$ (cf. Lemma \ref{onred}). Assume $\delta_p(x)\in \mathcal{Q}'_x$ assigns for a given $p\in (P_L)_x$ to any $\phi \in i^*\mathcal{Q}_x, \phi=[p,u]$ the value $\delta_p(\phi)=\delta(0)(u)$ (see (\ref{delta})) and is extended to a $\nabla^0$-parallel section $\delta_p \in \Gamma(i^*\mathcal{Q}')$. Denote by $\delta_p(y) \in (i^*Q')_y$ its restriction to $y \in L$. Analogously, let $\mathbf{1}_p \in \Gamma(i^*\mathcal{Q}')$ be the $\nabla^0$-parallel dual spinor field defined by $\mathbf{1}_p=[p, 1] \in i^*\mathcal{Q}'_x$. Then we have
\begin{theorem}\label{theorem2}
Let $\gamma:[0,1]\rightarrow L$ denote a smooth path connecting $x,y \in L$ and assume that $L(x)\perp L(y)$ w.r.t. the Euclidean metric in $\mathbb{R}^{2n}$, where $L(x)=i_*(T_xL)$ and $L(y)=i_*(T_yL)$, respectively. Then $\mathcal{P}^{\nabla^{g}}_{\gamma}\delta_p(x)\in (i^*\mathcal{Q}')_y$ and we have
\begin{equation}\label{transversal}
\mathcal{P}^{\nabla^{g}}_{\gamma}\delta_p(x)=c(y)e^{-i\frac{\pi}{2}\mu_{CLM}([L(y)],[i_{*}\gamma])}\mathbf{1}_p(y),
\end{equation}
for $0 < c(y) \in \mathbb{R}$. On the other hand, suppose that ${\rm dim}\ L(x)\cap L(y)=n$ in $\mathbb{R}^{2n}$, then 
\begin{equation}\label{othercase}
\mathcal{P}^{\nabla^{g}}_{\gamma}\delta_p(x)=e^{-i\frac{\pi}{2}\mu_{CLM}([L(y)],[i_{*}\gamma])}\delta_p(y),
\end{equation}
in $\mathcal{Q}'_y$.
\end{theorem}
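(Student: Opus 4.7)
The plan is to parallel the proof of Theorem~\ref{theorem1} but now on the dual spinor bundle, reducing the computation to Gosson's analytic formula for the metaplectic Maslov index $m_{L_0}$ on $Mp(2n,\mathbb{R})$ applied to the \emph{dual} Shale--Weil representation acting on $\delta(0)$ and $\mathbf{1}$. The two cases of the theorem correspond exactly to the two Lagrangian configurations on which Gosson's formula is most transparent: the stabilizer of $L_0=\{0\}\times\mathbb{R}^n$, where the action on $\delta(0)$ is by a pure phase, and the Fourier transform, which interchanges $L_0$ with $L_0^\perp=\mathbb{R}^n\times\{0\}$ and sends $\delta(0)$ to a positive real multiple of $\mathbf{1}$.

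First I would set up the frame-bundle comparison. By Lemma~\ref{onred}, $\nabla^{g}$ reduces to $P_L$, so the $\nabla^{g}$-parallel transport of $p\in(P_L)_x$ along $\gamma$ defines a path $p(t)\in (P_L)_{\gamma(t)}$ of $O(n)$-adapted frames. The $\nabla^{0}$-parallel transport, coming from the flat connection on $\mathbb{R}^{2n}$, merely re-identifies fibres via the canonical trivialisation of $i^{*}P$; call the resulting frame $\hat p(t)$, with $\hat p(0)=p(0)=p$. Both lie in the same fibre of $i^{*}P$, so they differ by a unique $\tilde S(t)\in Mp(2n,\mathbb{R})$ with $\tilde S(0)=e$. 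Since $\delta_{p}$ is $\nabla^{0}$-parallel and equals $[\hat p(t),\delta(0)]$ in the frame $\hat p(t)$, the $\nabla^{g}$-parallel section starting at $\delta_p(x)$ is $[p(t),\delta(0)]=[\hat p(t),\rho^{*}_{SW}(\tilde S(t))\delta(0)]$; in particular
\[
\mathcal{P}^{\nabla^{g}}_{\gamma}\delta_p(x)=\bigl[\hat p(1),\rho^{*}_{SW}(\tilde S(1))\delta(0)\bigr]\in (i^{*}\mathcal{Q}')_y.
\]

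Now the two cases split according to the image of $L_0$ under $\rho(\tilde S(1))$. If $L(y)=L(x)$, then $p(1)$ and $\hat p(1)$ are both adapted to $L(y)$, so $\rho(\tilde S(1))$ stabilises $L_0$ in the frame $p$; Gosson's theorem gives that on the stabilizer the Shale--Weil action on $\delta(0)$ is multiplication by $e^{-i\frac{\pi}{2}m_{L_0}(\tilde S(1))}$, yielding \eqref{othercase}. If $L(y)\perp L(x)$, then $\rho(\tilde S(1))$ maps $L_0$ to $L_0^\perp$ and factors (modulo the stabilizer of $L_0$) through a metaplectic lift of the Fourier transform; this sends $\delta(0)$ to a strictly positive real multiple of $\mathbf{1}$ (from the $(2\pi)^{-n/2}$ normalisation) times the phase $e^{-i\frac{\pi}{2}m_{L_0}(\tilde S(1))}$, giving \eqref{transversal} with $c(y)>0$.

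The final step is the identification of $m_{L_0}(\tilde S(1))$ with $\mu_{CLM}([L(y)],[i_{*}\gamma])$, which follows from the identity $m_L(\tilde S)=\mathcal M(\tilde S\tilde L,\tilde L)+n$ recalled in the introduction together with the observation that the path $t\mapsto \rho(\tilde S(t))L_0$, read in the frame $p$, is precisely the path $t\mapsto i_*T_{\gamma(t)}L$ of Lagrangian tangent planes along $\gamma$. The sign in the exponent flips relative to \eqref{theorem} because we are acting on the dual bundle $\mathcal{Q}'$ rather than on $\mathcal{Q}_0$. The main obstacle will be the Fourier-transform case: one has to verify that the action of a metaplectic lift of the Fourier transform on the delta distribution produces the claimed phase plus a strictly positive real factor, and that the positivity of $c(y)$ is independent of the choice of $p\in (P_L)_x$ used to define $\delta_p$; this amounts to a careful use of the explicit Shale--Weil intertwiners, after which both assertions reduce to the scalar argument already exploited in Theorem~\ref{theorem1}.
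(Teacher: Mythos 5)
Your set-up (frame comparison via $\nabla^g$ vs.\ $\nabla^0$, reduction to a path $\tilde S(t)$ in $\hat U(n)$, identification of the two cases via the image of $L_0$) is the same as the paper's, and the overall plan is sound. However, the core of your argument defers the two key analytic facts to an imprecisely stated ``Gosson's theorem,'' and here there is a genuine gap in the normalizations.

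You assert that on the stabilizer of $L_0$ the dual Shale--Weil action on $\delta(0)$ is multiplication by $e^{-i\frac{\pi}{2}m_{L_0}(\tilde S(1))}$, and that $m_{L_0}(\tilde S(1))=\mu_{CLM}([L(y)],[i_*\gamma])$. Neither is correct as stated. The index $m_L$ from the introduction and the index $\mu_L$ of Lemma~\ref{modmaslov} differ: using (\ref{maslovequality}) one has $\mu_L(\tilde S)=2m_L(\tilde S)-n-\dim(SL\cap L)$, so comparing with (\ref{masloveq}) forces $m_{L_0}(\tilde S(1))=\mu_{CLM}([L(y)],[i_*\gamma])+n \bmod 4$, not $\mu_{CLM}$. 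Moreover, your proposed path for $\mathcal M(\tilde S\tilde L_0,\tilde L_0)$ runs from $SL_0$ to $L_0$ with constant reference $L_0$, while the theorem's $\mu_{CLM}([L(y)],[i_*\gamma])$ uses the reversed path and the opposite endpoint as reference; reconciling these requires the reversal/reparametrization axioms of \cite{CLM} with care about the $\dim(L(0)\cap L(1))$ correction. The paper sidesteps these finicky normalization issues by \emph{not} using $m_{L_0}$ at all: it works with $\hat\mu(\hat S_{W,m})=2m-n$ of Theorem~\ref{sympdecomp}, the quadratic Fourier transform decomposition (\ref{qdecomp}), and the explicit computations (\ref{u0calc}), (\ref{othercase4}), and pins the relation to $\mu_{CLM}$ via the corollary (\ref{masloveq}), which carries the exact dimension correction. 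Your proposal should either follow that route or carefully redo the bookkeeping between $m_{L_0}$, $\mu_{L_0}$ and $\mu_{CLM}$; as written, the exponent is off by a term depending on $n$.

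A second, smaller gap: the statement that a ``metaplectic lift of the Fourier transform sends $\delta(0)$ to a strictly positive real multiple of $\mathbf 1$ times the phase $e^{-i\frac{\pi}{2}m_{L_0}}$'' is the conclusion you need, not a citable fact. In the paper this is derived by explicitly computing the oscillatory integral (\ref{eqbla}) for the operator $\hat S_{W,m}$ with $W=(-AB^{-1},-B^{-1},-B^{-1}A)$, and the positivity of $c(y)=\bigl(\tfrac{1}{2\pi}\bigr)^{n/2}|\det(-B^{-1})|^{1/2}$ then falls out. You correctly flag this as the main obstacle, but the proposal would need that computation (or an equivalent one, e.g.\ via (\ref{proofdec})) rather than a citation; without it the factor $i^{-m'+n/2}$ and its relation to the Maslov index are not under control.
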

Let now $\mathcal{Q}^J_l,\ l\in \mathbb{N}_0$ be the splitting of $\mathcal{Q}$ induced by the canonical complex structure $J$ of $\mathbb{R}^{2n}$, i.e. $\mathcal{Q}_0=\mathcal{Q}^J_0$ (see Section \ref{metaplect}, Prop. \ref{decomp}). Then Theorem \ref{theorem2} immediately implies
\begin{folg}\label{cor2}
Let $\gamma:[0,1]\rightarrow L$ be a smooth path with endpoints $x,y \in L$ s.t. ${\rm dim}\ L(x)\cap L(y)=n$ or $L(x)\perp L(y)$ and let $\varphi\in \Gamma(i^*\mathcal{Q}_0)$ be $\nabla^0$-parallel, then
\begin{equation}\label{transversal2}
\mathcal{P}^{\nabla^{g}}_{\gamma}\varphi(x)=e^{i\frac{\pi}{2}\mu_{CLM}([L(y)],[i_{*}\gamma])}\varphi(y).
\end{equation}
On the other hand, if ${\rm dim}\ L(x)\cap L(y)=n$ in $\mathbb{R}^{2n}$ and $\psi_l\in \Gamma(i^*\mathcal{Q}_l)$ is $\nabla^0$-parallel then
\begin{equation}\label{othercase2}
\delta_p(y)(\mathcal{P}^{\nabla^{g}}_{\gamma}\psi_l(x))=e^{i\frac{\pi}{2}\mu_{CLM}([L(y)],[i_{*}\gamma])}\delta_p(y)(\psi_l),
\end{equation}
where $\delta_p \in \Gamma(\mathcal{Q}')$ is as defined above Theorem \ref{theorem2}.
\end{folg}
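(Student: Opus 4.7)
The plan is to deduce both identities from Theorem \ref{theorem2} via the duality pairing between $i^*\mathcal{Q}$ and $i^*\mathcal{Q}'$, using two structural facts. First, $\mathcal{P}^{\nabla^g}_\gamma$ preserves the canonical pairing, so $(\mathcal{P}^{\nabla^g}_\gamma \alpha(x))(\mathcal{P}^{\nabla^g}_\gamma v(x))=\alpha(x)(v(x))$. Second, $\mathcal{P}^{\nabla^g}_\gamma$ respects the splitting $i^*\mathcal{Q}=\bigoplus_l i^*\mathcal{Q}_l$, because $\nabla^g$ comes from an $O(n)$-connection on the reduction $P_L$ of Lemma \ref{onred}, and $O(n)\subset U(n)\subset Mp(2n)$ preserves the canonical $J$ defining the splitting. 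I will also use that if $\alpha\in\Gamma(i^*\mathcal{Q}')$ and $v\in\Gamma(i^*\mathcal{Q})$ are both $\nabla^0$-parallel then $\alpha(v)$ is constant on $L$.

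For equation (\ref{othercase2}), substituting Theorem \ref{theorem2} equation (\ref{othercase}), namely $\mathcal{P}^{\nabla^g}_\gamma \delta_p(x)=e^{-i\pi\mu/2}\delta_p(y)$, into these ingredients yields the chain
\[
\delta_p(y)(\psi_l(y))=\delta_p(x)(\psi_l(x))=(\mathcal{P}^{\nabla^g}_\gamma \delta_p(x))(\mathcal{P}^{\nabla^g}_\gamma \psi_l(x))=e^{-i\pi\mu/2}\,\delta_p(y)(\mathcal{P}^{\nabla^g}_\gamma \psi_l(x)),
\]
and rearranging gives (\ref{othercase2}). For (\ref{transversal2}) in the case $\dim L(x)\cap L(y)=n$ I specialize to $l=0$, $\psi_0=\varphi$: the splitting-preservation puts $\mathcal{P}^{\nabla^g}_\gamma \varphi(x)$ into the one-dimensional fiber $(i^*\mathcal{Q}_0)_y$, so $\mathcal{P}^{\nabla^g}_\gamma \varphi(x)=\lambda\,\varphi(y)$, and pairing with $\delta_p(y)$, which is non-zero on the Gaussian ground state, pins down $\lambda=e^{i\pi\mu/2}$ via (\ref{othercase2}).

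The only non-formal step is the orthogonal case $L(x)\perp L(y)$ of (\ref{transversal2}), where Theorem \ref{theorem2} equation (\ref{transversal}) delivers $\mathcal{P}^{\nabla^g}_\gamma \delta_p(x)=c(y)e^{-i\pi\mu/2}\mathbf{1}_p(y)$ with a positive constant $c(y)$. Running the analogous duality computation and again writing $\mathcal{P}^{\nabla^g}_\gamma \varphi(x)=\lambda\varphi(y)$ by one-dimensionality, I obtain
\[
\lambda\,c(y)\,\mathbf{1}_p(y)(\varphi(y))=e^{i\pi\mu/2}\,\delta_p(y)(\varphi(y)),
\]
so the argument closes precisely when $c(y)$ coincides with the intrinsic ratio of the Gaussian's point-value at $0$ to its total integral, i.e.\ the universal constant $(2\pi)^{-n/2}$. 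I expect this identification of $c(y)$ to emerge directly from the proof of Theorem \ref{theorem2}, where $c(y)$ appears as the normalization of the Fourier-type intertwiner implementing the change of Schrödinger polarization from $L(x)$ to $L(y)$; verifying that this normalization is the required constant is the sole computational ingredient that the corollary needs beyond Theorem \ref{theorem2}.
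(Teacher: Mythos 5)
Your proof is correct, and it takes a genuinely different organizational route from the paper. The paper essentially re-derives Corollary~\ref{cor2} from the explicit $\kappa$--$\hat S(1)$--$\delta(0)$ computations: the $\dim = n$ case of (\ref{transversal2}) is handled by ``the same arguments as in the proof of Theorem~\ref{theorem1},'' the orthogonal case by going back to the oscillatory integral (\ref{transversegen}) and pinning the phase via unitarity of $\kappa(\hat S(1))$, and (\ref{othercase2}) by another direct computation with $\kappa(\hat S(1))^*\delta(0)$ using (\ref{othercase4}). You instead extract exactly what is formal: $\nabla^g$-parallel transport preserves the $\mathcal{Q}$--$\mathcal{Q}'$ pairing (dual connection), preserves each $\mathcal{Q}_l$ (since the holonomy sits in $\hat U(n)$, equivalently $\kappa(\hat U(n))\mathcal{M}_l\subset\mathcal{M}_l$), and $\nabla^0$-parallel duals have constant pairing, so that (\ref{othercase2}) follows in one line from (\ref{othercase}) and (\ref{transversal2}) follows by one-dimensionality of $\mathcal{M}_0$ together with non-vanishing of $\delta(0)$ on the Gaussian. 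This treats Corollary~\ref{cor2} as a genuine corollary and isolates the single non-formal input cleanly.

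Your identification of the remaining gap is accurate: in the orthogonal case you need $c(y)\mathbf{1}_p(y)(\varphi(y))=\delta_p(y)(\varphi(y))$, i.e.\ $c(y)=(2\pi)^{-n/2}$. The paper supplies this in the proof of Theorem~\ref{theorem3}: there $c(y)=(2\pi)^{-n/2}|\det(-B^{-1})|^{1/2}$, and $A=0$ with $A^tA+B^tB=I$ forces $|\det B|=1$; the paper's unitarity argument $\tilde c(y)(2\pi)^{n/2}=1$ encodes the same fact. Incidentally, you could even close this last step without looking inside the proof of Theorem~\ref{theorem2}: since the $\nabla^g$-holonomy lies in $\hat U(n)$ and $\kappa$ is unitary, $|\lambda|=1$, so taking moduli in $\lambda\,c(y)\,\mathbf{1}_p(y)(\varphi(y))=e^{i\pi\mu/2}\delta_p(y)(\varphi(y))$ already forces $c(y)=(2\pi)^{-n/2}$ and hence $\lambda=e^{i\pi\mu/2}$. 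That would make your argument fully self-contained relative to the statement of Theorem~\ref{theorem2}.
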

Note that we give a formula extending Theorem \ref{theorem2} to endpoint transversal paths (that is $L(x)\cap L(y)=0$) in Theorem \ref{theorem3} (compare also (\ref{transversegen})). Further note that (\ref{transversal2}) extends Theorem \ref{theorem1} to the case of endpoint-orthogonal paths in $L$ in the sense that the arguments used to prove Theorem \ref{theorem1} already show that (\ref{transversal2}) holds for $L(x)=L(y)$. On the other hand, (\ref{othercase2}) means that $\mu_{CLM}$ determines the 'holonomy at zero' along closed paths in any of the subbundles $\mathcal{Q}_l$, that is, the holonomy multiplies the 'zero value' of any element of $\mathcal{Q}_l$ w.r.t to a given metaplectic frame by some element of $\mathbb{Z}_4\subset U(1)$ which is determined by the Maslov index.\\
The paper is organized as follows: in Section \ref{section1}, we will review in some more detail the above mentioned facts concerning the diverse integer invariants on Lagrangians paths and certain (cyclic) coverings of the symplectic group. Section \ref{metaplect} contains a short discussion of the metaplectic representation with special emphasis on the properties of the so called `quadratic Fourier transforms' and gives some 
necessary background on symplectic spinors. In Section \ref{proofs} finally
we will arrive at the actual proof of Theorems \ref{theorem1}, \ref{theorem2} and Theorem \ref{theorem3}.\\
We thank the anonymous referee for numerous valuable remarks and critique.

\section{Maslov indices for Lagrangian Paths and the Metaplectic Group}\label{section1}
\label{maslov}
In this section, $(V,\omega)$ will be $(\mathbb{R}^{2n},\omega_0)$ and we will
write $Lag(n)$, $\widetilde {Lag}(n)$ for the Lagrangian Grassmannian and
its universal covering, $Sp(2n)$, $Mp(2n)$ and $\widetilde{Sp}(2n)$ for the
symplectic group resp. its connected twofold and universal covering groups. To give some
intuition to the definitions we will review some fundamental results about the
Lagrangian Grassmannian, its universal covering and associated group actions,
see for instance Souriau (\cite{souriau}).\\
Since on one hand $U(n)=Sp(2n) \cap 0(2n)$ acts transitively on $Lag(n)$ 
\begin{equation}\label{action}
U(n)\times Lag(n) \rightarrow Lag(n), \quad (s,L) \mapsto sL
\end{equation}
with $O(n)$ the isotropy subgroup of $L_0=0\times \mathbb{R^n}$ we have $Lag(n)
\simeq U(n)/O(n)$. On the other hand, if $R_1,R_1 \in U(n)$ with
$R_1L_0=R_2L_0$ and the lower case letters $r_1,r_2$ denote the inverse images
of $R_1,R_2$ under the isomorphism
\[
i: U(n,\mathbb{C}) \subset M(n,\mathbb{C}) \rightarrow U(n)\subset
M(2n,\mathbb{R}) \quad (A+iB) \mapsto \left(\begin{smallmatrix}A
&-B\\B&A\end{smallmatrix} \right)
\]
where $A,B \in M(n,\mathbb{R}), \ A^TA+B^TB=I$ and $A^TB$ symmetric, then
\[
R_1L_0=R_2L_0 \ \Leftrightarrow \ r_1(r_1)^T=r_2(r_2)^T
\]
where $r^T$ is the transposed of $r$. Hence we get a homeomorphism
\[
F: Lag(n) \rightarrow W(n,\mathbb{C})=U(n,\mathbb{C}) \cap
sym(n,\mathbb{C}),\quad  L=RL_0 \mapsto rr^T
\]
satisfying $F(RL)=rF(L)r^T$, concluding that we identified $Lag(n)$ with
a subset of $U(n,\mathbb{C})$. Now the action (\ref{action}) is covered by a
unique transitive group action
\begin{equation}\label{actuni}
\tilde{U}(n,\mathbb{C}) \times \widetilde{Lag}(n) \rightarrow
\widetilde{Lag}(n) \end{equation}
where $\tilde{U}(n,\mathbb{C})$ is the universal covering group of
$U(n,\mathbb{C})$ which can easily seen to be
realized by defining 
\[\tilde{U}(n,\mathbb{C}) =\{(r,\phi):r  \in U(n,\mathbb{C}),\ 
det(r)= e^{i\phi}\}\] 
with the group composition
$(r,\phi)(r',\phi')=(rr',\phi+\phi')$ and projection $\pi:(R,\phi)\mapsto R$
and using the topology induced by $\pi$. Define
$\widetilde{W}(n,\mathbb{C})=\{(w,\phi)\in \tilde{U}(n,\mathbb{C}):w \in
W(n,\mathbb{C})\}$ with projection to $W(n,\mathbb{C})$ being the restriction
of $\pi$. Observe that $\widetilde{W}(n,\mathbb{C})$ is connected and
simply connected since the group $\tilde{U}(n,\mathbb{C})$ acts
transitively on $\widetilde{W}(n,\mathbb{C})$ with isotropy subgroup SO(n) of
$(I,0)$ by defining 
\begin{equation}\label{actconcrete}
(R,\phi)(w,\theta)=(rwr^T,\theta+2\phi),\quad (r,\phi) \in
\widetilde{U}(n,\mathbb{C}),\ (w,\theta) \in \widetilde{W}(n,\mathbb{C}). 
\end{equation} 
So $\widetilde{W}(n,\mathbb{C})\simeq \widetilde{Lag}(n)$ and
the above action realizes (\ref{actuni}) covering (\ref{action}). The
decktransformations of $\tilde{U}(n,\mathbb{C})$ are obviously of the form
${I}\times 2\pi\mathbb{Z}$, so
$\pi_1(U(n,\mathbb{C}))=\pi_1(Sp(2n))= I\times 2\pi\mathbb{Z}$.
Identifying the group of decktransformations of $\widetilde{Lag}(n)$ with the
subgroup $I \times \pi\mathbb{Z} \subset \widetilde{U}(n,\mathbb{C})$ by the action (\ref{actconcrete}), we arrive at $\pi_1(Lag)=I\times\pi\mathbb{Z}$. If we denote $\beta=(I,\pi)$ and $\alpha=(I,2\pi)$ the respective generators of $\pi_1(Lag(n))$ and
$\pi_1(Sp(2n))$ we get \begin{equation}\label{doubling}
(\alpha\widetilde{U})(\tilde{L})=\beta^2(\tilde U\tilde L)=\widetilde U(\beta^2\tilde
L) \end{equation}
for $\tilde U \in \widetilde U(n,\mathbb{C})$, $\tilde L \in \widetilde{Lag}(n)$. Understanding $\alpha$ resp. $\beta$ as generators of the group of decktransformations of $\widetilde{Sp}(2n)$ and $\widetilde{Lag}(n)$ (using that $U(n,\mathbb{C})\subset Sp(2n)$ is a maximal compact subgroup) we define for $q\in \mathbb{N}_+$
\begin{equation}
Sp_q(2n)=\widetilde{Sp}(2n)/\{\alpha^{qk}:k\in \mathbb{Z}\}\quad {Lag}_q(n)=\widetilde{Lag}(n)/\{\beta^{qk}:k\in \mathbb{Z}\}.
\end{equation}
These constitute the (unique up to isomorphism) $q$-fold cyclic connected coverings $\rho_q:{Sp}_q(2n)\rightarrow {Sp}(2n)$ resp. $\varphi_q:{Lag}_q(n)\rightarrow {Lag}(n)$. This means that $(\rho_q)_*(\pi_1({Sp}_q(2n)))=q\mathbb{Z}$ resp. $(\varphi_q)_*(\pi_1({Lag}_q(n)))=q\mathbb{Z}$ and one has the commuting diagram 
\begin{equation}\label{commutation}
\begin{CD}
\widetilde{Sp}(2n)  @>>{\pi^{Sp}_q}> {Sp}_q(2n) \\
@VV{\pi^{Sp}}V              @VV{\rho_q}V \\
{Sp}(2n) @>>id>   {Sp}(2n)
\end{CD}
\end{equation}
where $\pi^{Sp}_q$ is defined so that the diagram commutes. Note there is an analogous diagram in the case of $Lag(n)$ involving the mapping $\pi_k:\widetilde{Lag}(n)\rightarrow {Lag}_q(n)$ satisfying $\pi=\rho_q\circ\pi_q:\widetilde{Lag}(n))\rightarrow Lag(n)$. As a consequence of (\ref{doubling}), we infer that the action (\ref{actuni}) projects for each $q>0$ to an action
\begin{equation}
Sp_q(2n) \times Lag_{2q}(n) \rightarrow Lag_{2q}(n).
\end{equation}
Now, in \cite{gosson2} resp. \cite{gosson} one defines an index $\mu: \widetilde{Lag}(n)\times \widetilde{Lag}(n)\rightarrow \mathbb{Z}$ which is uniquely defined by the two conditions, where we write in the following $L=\pi(\tilde L)$ for $\tilde L \in \widetilde{Lag}(n)$:
\begin{enumerate}
\item $\mu$ is locally constant on the set $\{(\tilde L_1,\tilde L_2): L_1\cap L_2=0\}$
\item for $(\tilde L_1, \tilde L_2, \tilde L_3)\in \widetilde {Lag}^3(n)$ we have
\[
\mu(\tilde L_1,\tilde L_2)-\mu(\tilde L_1,\tilde L_3)+\mu(\tilde L_2, \tilde L_3)=\tau(L_1,L_2.L_3).
\]
\end{enumerate}
Here, $\tau$ is the signature of the quadratic form on $L_1\oplus L_2 \oplus L_3$ defined by
\[
(z_1,z_2,z_3) \mapsto \omega(z_1,z_2)\oplus \omega(z_2,z_3)\oplus\omega(z_1,z_3).
\]
As is shown in (\cite{gosson2}, Proposition 3.16 resp. Corollary 3.22), if $(\tilde S,\tilde L_1,\tilde L_2)\in Sp_q(2n) \times Lag_{2q}(n)^2$, then
\begin{equation}\label{invariance}
\mu(\tilde S\tilde L_1, \tilde S\tilde L_2)=\mu(\tilde L_1, \tilde L_2),
\end{equation}
furthermore if $\beta=(I,\pi)$ generates $\pi_1(Lag(n))$ as above, then 
\begin{equation}\label{generator}
\mu(\beta^r \tilde L_1, \beta^{r'} \tilde L_2)=\mu(\tilde L_1, \tilde L_2)+2(r-r'),
\end{equation}
if $r,r' \in \mathbb{Z}$. For $\tilde L_1,\tilde L_2$, let $\mathcal{M}(\tilde L_1,\tilde L_2)\in \mathbb{Z}$ be as defined below (\ref{CLMdef}) and define
\begin{equation}\label{maslovequality}
\hat \mu(\tilde L_1,\tilde L_2)=2\mathcal{M}(\tilde L_1,\tilde L_2) +(n-{\rm dim}(L_1\cap L_2)).
\end{equation}
Then using the defining conditions for $\mu$, it is proven in (\cite{CLM}, Prop. 9.1) that
\begin{lemma}\label{cappelmaslov}
For all $\tilde L_1,\tilde L_2 \in \widetilde{Lag}(n)$ the index $\mu(\tilde L_1,\tilde L_2)$ coincides with $\hat \mu(\tilde L_1,\tilde L_2)$.
\end{lemma}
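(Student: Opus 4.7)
The strategy is dictated by the statement: since $\mu$ is uniquely characterized by axioms (1) and (2) listed above the lemma, it suffices to verify that $\hat\mu(\tilde L_1,\tilde L_2)=2\mathcal{M}(\tilde L_1,\tilde L_2)+(n-\dim(L_1\cap L_2))$ enjoys these two properties. Thus I would split the proof into a local-constancy check and a triple-cocycle check, and at the end appeal to uniqueness.

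For the first axiom, I observe that on the open stratum $\{(\tilde L_1,\tilde L_2):L_1\cap L_2=0\}$ the correction term reduces to the constant $n$, so everything comes down to local constancy of $\mathcal{M}$. Given a nearby pair $(\tilde L_1',\tilde L_2')$ also in the transverse locus, one may choose short paths $\tilde\alpha_i$ from $\tilde L_i$ to $\tilde L_i'$ that stay inside the contractible neighborhoods where $L_1(t)\cap L_2(t)=0$ throughout. Concatenating the representing path $\tilde\gamma$ for $\mathcal{M}(\tilde L_1,\tilde L_2)$ with these yields a representing path for $\mathcal{M}(\tilde L_1',\tilde L_2')$, and the difference of the two CLM-indices is the CLM-index of a pair-path that remains transverse on the segment where $\tilde\alpha_i$ operate. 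By the normalization / zero-axiom of the CLM-index on transverse pair-paths, this difference is zero, establishing local constancy.

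For the triple cocycle, one rewrites what is required as
\begin{equation*}
2\bigl[\mathcal{M}(\tilde L_1,\tilde L_2)-\mathcal{M}(\tilde L_1,\tilde L_3)+\mathcal{M}(\tilde L_2,\tilde L_3)\bigr]=\tau(L_1,L_2,L_3)+\dim(L_1\cap L_3)-\dim(L_1\cap L_2)-\dim(L_2\cap L_3)+n.
\end{equation*}
I would choose representing paths $\tilde\gamma_{ij}$ in $\widetilde{Lag}(n)$ from $\tilde L_i$ to $\tilde L_j$ arranged as the edges of a triangle so that $\tilde\gamma_{12}\ast\tilde\gamma_{23}$ is homotopic rel endpoints to $\tilde\gamma_{13}$. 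Using the concatenation additivity of $\mu_{V,CLM}$ together with homotopy invariance rel. endpoints, the alternating sum on the left collapses to $\mu_{V,CLM}$ evaluated on a closed loop of Lagrangian pair-paths that runs through $(L_0,L_1)\to(L_0,L_2)\to(L_0,L_3)\to(L_0,L_1)$ (with $L_0$ chosen as in (\ref{CLMdef})). The computation of this loop index in terms of the Kashiwara--Wall triple signature $\tau(L_1,L_2,L_3)$ together with the dimension defects is precisely the content of the triple-index identity already established in \cite{CLM}, which I would invoke rather than reprove, obtaining the desired identity.

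The main obstacle is the second step: the first axiom is an essentially soft continuity argument, whereas the triple cocycle is the substantive identity tying $\mathcal{M}$ to the Wall--Kashiwara signature. Once both axioms are checked, uniqueness of $\mu$ (defined by (1) and (2) on all of $\widetilde{Lag}(n)\times\widetilde{Lag}(n)$) forces $\hat\mu=\mu$ pointwise, completing the proof.
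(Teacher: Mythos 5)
Your high-level strategy---verify that $\hat\mu$ satisfies the two axioms that uniquely characterize $\mu$, then conclude by uniqueness---is exactly the right one, and it is in fact what Cappell--Lee--Miller do. The paper itself offers no argument here at all: the ``proof'' is literally the citation of \cite{CLM}, Prop.~9.1. So in spirit your route and the paper's coincide, and like the paper you ultimately defer the substantive triple-signature identity to \cite{CLM}. Two details in your sketch, however, are off.

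First, the rewritten cocycle identity has the wrong sign on the correction terms. Substituting $\hat\mu(\tilde L_i,\tilde L_j)=2\mathcal{M}(\tilde L_i,\tilde L_j)+(n-\dim(L_i\cap L_j))$ into the cocycle relation gives
\begin{equation*}
2\bigl[\mathcal{M}(\tilde L_1,\tilde L_2)-\mathcal{M}(\tilde L_1,\tilde L_3)+\mathcal{M}(\tilde L_2,\tilde L_3)\bigr]=\tau(L_1,L_2,L_3)-n+\dim(L_1\cap L_2)-\dim(L_1\cap L_3)+\dim(L_2\cap L_3),
\end{equation*}
whereas you wrote $\tau+\dim(L_1\cap L_3)-\dim(L_1\cap L_2)-\dim(L_2\cap L_3)+n$, i.e.\ the opposite sign on the entire $n$-and-dimension block.

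Second, and more substantively, the ``closed loop through $(L_0,L_1)\to(L_0,L_2)\to(L_0,L_3)\to(L_0,L_1)$ with $L_0$ chosen as in (\ref{CLMdef})'' is not available: (\ref{CLMdef}) fixes $L_0=\gamma(1)$, the \emph{endpoint} Lagrangian of the particular path, so the reference Lagrangian is $L_2$ in $\mathcal{M}(\tilde L_1,\tilde L_2)$ but $L_3$ in the other two terms. The three CLM indices in the alternating sum therefore do not have a common constant first path, and the collapse to a single pair-loop does not happen directly. Reconciling the changing reference Lagrangians is exactly where the Kashiwara--Wall signature enters, i.e.\ it is the actual content of CLM Prop.~9.1, not a formality one can skip to before invoking it. Since you do say you would invoke that result, the strategy still lands, but the intermediate reduction as you have described it is not correct.
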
\label{spindex}
The two properties (\ref{invariance}) and (\ref{generator}) of $\mu$ imply the following Proposition resp. Definition of a Maslov index on $\widetilde{Sp}(2n)$ resp. $Sp_q(2n)$ relative to a fixed Lagrangian $L \in Lag(n)$, which was the aim of this section:
\begin{lemma}\label{modmaslov}
Let $L \in {Lag}(n)$, then the mapping $\mu: \widetilde{Sp}(2n)\rightarrow \mathbb{Z}$ given by
\[
\mu_L(\tilde S)=\mu(\tilde S\tilde L,\tilde L)
\]
is well-defined, that is, independent of the choice of $\tilde L$ lifting $L$. Furthermore, for any $q \in \mathbb{N}_+$, $\mu(\cdot)\ {\rm mod}\ 4q$ factorizes to a well-defined mapping $\mu_q: Sp_{q}(2n)\rightarrow \mathbb{Z}_{4q}$, that is for $S_q \in Sp_q(2n)$ the expression 
\[
\mu_{L,q}(S_{q})=\mu(\tilde S \tilde L, \tilde L)\ mod\  4q
\]
so that $\pi^{Sp}_q(\tilde S)=S_{q}$ does not depend on the choice of $\tilde S \in \widetilde{Sp}(2n)$.
\end{lemma}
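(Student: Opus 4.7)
The plan is to exploit the transformation rules (\ref{invariance}) and (\ref{generator}) of $\mu$ together with the doubling relation (\ref{doubling}) between the generators $\alpha$ of $\pi_1(Sp(2n))$ and $\beta$ of $\pi_1(Lag(n))$. The first assertion will follow from (\ref{generator}) combined with the commutativity of the $\widetilde{Sp}(2n)$-action on $\widetilde{Lag}(n)$ with deck transformations; the factorization claim will rest on converting $\alpha^{qk}$ into $\beta^{2qk}$ via (\ref{doubling}) and then applying (\ref{generator}) once more. The factor $4q$ in the target group $\mathbb{Z}_{4q}$ then emerges naturally as the product of the factor $2$ in (\ref{generator}) with the factor $2$ inherent in the correspondence $\alpha \leftrightarrow \beta^2$.

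For the first part I would note that any two lifts $\tilde L, \tilde L'$ of $L$ differ by $\tilde L' = \beta^r \tilde L$ for a unique $r \in \mathbb{Z}$, since $\beta$ generates the deck group of $\widetilde{Lag}(n) \to Lag(n)$. Because the $\widetilde{Sp}(2n)$-action on $\widetilde{Lag}(n)$ is the unique lift of the base $Sp(2n)$-action, it is equivariant with respect to the deck group, so $\tilde S (\beta^r \tilde L) = \beta^r (\tilde S \tilde L)$. Applying (\ref{generator}) with equal exponents $r$ in both entries then cancels the shift and yields $\mu(\tilde S \tilde L', \tilde L') = \mu(\tilde S \tilde L, \tilde L)$, establishing independence of the choice of lift.

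For the factorization, I would pick two lifts $\tilde S, \tilde S' \in \widetilde{Sp}(2n)$ of the same $S_q \in Sp_q(2n) = \widetilde{Sp}(2n)/\{\alpha^{qk}: k \in \mathbb{Z}\}$, so that $\tilde S' = \alpha^{qk} \tilde S$ for some $k \in \mathbb{Z}$. Iterating (\ref{doubling}) produces $(\alpha^{qk}\tilde S)(\tilde L) = \beta^{2qk}(\tilde S \tilde L)$, and a further application of (\ref{generator}) gives
\[
\mu(\tilde S' \tilde L, \tilde L) \;=\; \mu(\beta^{2qk}(\tilde S \tilde L),\tilde L) \;=\; \mu(\tilde S \tilde L, \tilde L) + 4qk \;\equiv\; \mu(\tilde S \tilde L, \tilde L) \pmod{4q}.
\]
The only step requiring brief comment is the $\widetilde{Sp}(2n)$-equivariance invoked in the first part; this is standard covering-space theory (uniqueness of the lifted action forces commutation with the deck group) and hence is not a genuine obstacle. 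Modulo that observation, the argument amounts to two short algebraic substitutions using only (\ref{doubling}) and (\ref{generator}), with no further analytic input required.
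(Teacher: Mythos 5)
Your proof is correct and follows essentially the same route as the paper's (which defers the details to Gosson's book but indicates (\ref{generator}) and (\ref{doubling}) as the operative tools, sketching precisely your computation $\mu_L(\alpha^r\tilde S)=\mu_L(\tilde S)+4r$ — there is a typo in the paper's displayed formula). The one place worth a word of caution: for the first claim you invoke $\tilde S(\beta^r\tilde L)=\beta^r(\tilde S\tilde L)$ for \emph{all} $r\in\mathbb Z$, whereas (\ref{doubling}) as stated only delivers it for even $r$ via $\alpha=\beta^2$; your appeal to covering-space theory supplies the odd case, but the phrase ``uniqueness of the lifted action forces commutation with the deck group'' is slightly off — uniqueness of lifts alone does not give commutation. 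The correct justification is that $\tilde S\circ\beta^r$ and $\beta^r\circ\tilde S$ both lift $S\colon Lag(n)\to Lag(n)$, hence differ by a fixed deck transformation which, by continuity in $\tilde S$, discreteness of the deck group, and connectedness of $\widetilde{Sp}(2n)$, must equal its value at $\tilde S=\mathrm{id}$, namely the identity. With that small repair the argument is complete.
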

\begin{proof}
The proof is given in Gosson's book \cite{gosson2} and follows directly by invoking the properties (\ref{invariance}) and (\ref{generator}) of $\mu$ on $\widetilde{Lag}(n)^2$ and by noting that these together with (\ref{doubling}) imply for $r \in \mathbb{Z}$ and $\tilde S \in \widetilde{Sp}(2n)$ and with $\alpha=(I,2\pi)$ generating $\pi_1(Sp(2n))$ as above
\[
\mu_L(\alpha^r\tilde S)=\mu_L(\alpha^r\tilde S)+4r.
\]
\end{proof}
Combining the preceding Lema and Lemma \ref{cappelmaslov}, we arrive at 
\begin{folg}
Let $S:[0,1] \rightarrow Sp(2n)$ be piecewise smooth, $S(0)=Id$, let $L\in Lag(n)$ be arbitrary and let $\hat S:[0,1]\rightarrow Mp(2n)=Sp_{2}(2n)$ be the unique lift of $S$ that begins at $Id\in Mp(2n)$, that is $\rho(\hat S(t)):=\rho_{2}(\hat S(t))=S(t), \ t\in [0,1]$ and $\hat S(0)=Id$. Denote $L(t)=S(t)L \in Lag(n), \ t\in [0,1]$. Then one has
\begin{equation}\label{masloveq}
\mu_{L,2}(\hat S(1))=2\mu_{CLM}([L(1)],L(t)) +(n-{\rm dim}(L(0)\cap L(1))\ {\rm mod}\ 8,
\end{equation}
where $\mu_{CLM}$ is the Maslov index on pairs of Lagrangian paths introduced in (\ref{CLMdef}) in $(\mathbb{R}^{2n},\omega_0)$.
\end{folg}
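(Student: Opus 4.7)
The plan is to assemble the corollary directly from the two preceding lemmas. First I would lift the path: choose any $\tilde L \in \widetilde{Lag}(n)$ with $\pi(\tilde L)=L$, and let $\tilde S:[0,1]\to \widetilde{Sp}(2n)$ be the unique lift of $S$ with $\tilde S(0)=Id$. By construction $\pi^{Sp}_2(\tilde S(1)) = \hat S(1)$, since both are lifts of $S(1)$ agreeing at $t=0$. Applying Lemma \ref{modmaslov} with $q=2$ yields
\[
\mu_{L,2}(\hat S(1)) \;=\; \mu(\tilde S(1)\tilde L,\tilde L) \pmod 8 ,
\]
and this value is independent of the chosen lift $\tilde L$.

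Next I would invoke Lemma \ref{cappelmaslov} (the Cappell--Lee--Miller identification $\mu=\hat\mu$) to rewrite the right-hand side:
\[
\mu(\tilde S(1)\tilde L,\tilde L) \;=\; 2\,\mathcal{M}(\tilde S(1)\tilde L,\tilde L) \;+\; \bigl(n-\dim(L(1)\cap L(0))\bigr).
\]
Combining the two displays, the corollary reduces to checking
\[
\mathcal{M}(\tilde S(1)\tilde L,\tilde L) \;\equiv\; \mu_{CLM}([L(1)],L(t)) \pmod 4.
\]
For this I would use the definition (\ref{CLMdef}): the path $\tilde\gamma(t)=\tilde S(t)\tilde L$ in $\widetilde{Lag}(n)$ joins $\tilde L$ to $\tilde S(1)\tilde L$ and projects to $\gamma(t)=L(t)$, so $\mathcal{M}(\tilde S(1)\tilde L,\tilde L)$ is, up to the convention in (\ref{CLMdef}) for the base Lagrangian, exactly the CLM index $\mu_{CLM}([L(1)],L(t))$ (a reversal of $\tilde\gamma$ may be needed to match the ordering $(x,y)=(\tilde S(1)\tilde L,\tilde L)$, and one then uses the standard CLM symmetry properties, which only introduce a shift that vanishes mod $4$ after being doubled to mod $8$).

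The substantive work, and the one real obstacle, is the bookkeeping in this last step: one must verify that the sign and orientation conventions used to \emph{define} $\mathcal{M}(x,y)=\mathcal{M}([\pi(y)];x,y)$ match the convention $\mu_{CLM}([L(1)],L(t))$ appearing in (\ref{masloveq}), and that passing from the path $t\mapsto L(t)$ (from $L(0)$ to $L(1)$) to the reversed path used implicitly in $\mathcal{M}(\tilde S(1)\tilde L,\tilde L)$ contributes only a term that, once doubled, is invisible modulo $8$. Since $\mu_{CLM}$ takes integer values and is subjected to $\mathrm{mod}\ 4$ reduction under the $2\mathcal{M}$ prefactor, any discrepancy of the form $k\cdot 4$ on the CLM side is absorbed. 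Once this is checked, substituting back into the first display yields exactly (\ref{masloveq}).
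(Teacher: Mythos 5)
Your proof follows exactly the route the paper takes: lift $S$ and $L$ to $\widetilde{Sp}(2n)$ and $\widetilde{Lag}(n)$, identify $\pi_2^{Sp}(\tilde S(1))=\hat S(1)$ via the commuting diagram, apply Lemma \ref{modmaslov} with $q=2$ and then Lemma \ref{cappelmaslov} together with the definition $(\ref{CLMdef})$ of $\mathcal{M}$ in terms of $\mu_{CLM}$. The bookkeeping issue you flag in the last step (matching the ordering/orientation conventions in $\mathcal{M}(x,y)=\mathcal{M}([\pi(y)];x,y)$ against $\mu_{CLM}([L(1)],L(t))$) is also left implicit in the paper's own proof, so your account is at least as explicit as the original and is, in spirit, the same argument.
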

\begin{proof}
Let $\tilde L \in \widetilde{Lag}(n)$ be any element covering $L(0)$, so $\pi(\tilde L)=L(0)$. Let $\tilde S$ be the element in $\widetilde{Sp}(2n)$ defined by the homotopy class of $S:[0,1] \rightarrow Sp(2n),\  S(0)=Id$, then by (\ref{commutation}) we have
\begin{equation}\label{proj1}
\pi^{Sp}_2(\tilde S)=\hat S(1).
\end{equation}
On the other hand, denote the lift of $S(t)$ to $\widetilde{Sp}(2n)$ by $\tilde S:[0,1]\rightarrow \widetilde{Sp}(2n)$. Then we have since $\tilde S(1)=\tilde S$ that $\tilde S(t) \tilde L \in \widetilde{Lag}(n)$ connects $\tilde L$ in $\widetilde{Lag}(n)$ to $\tilde S\tilde L$ and projects to $S(t)L$, that is $\pi(\tilde S(t)\tilde L)=S(t)L\in Sp(2n)$. Using (\ref{proj1}) and the latter observations together with (\ref{maslovequality}), Lemma \ref{cappelmaslov}, Lemma \ref{modmaslov} and the relation between $\mu_{CLM}$ and $\mathcal{M}(\cdot,\cdot)$ expressed in (\ref{CLMdef}) we arrive at (\ref{masloveq}).
\end{proof}
\section{The Metaplectic Representation and the Symplectic Spinor bundle}\label{metaplect}
As we saw in the last section, $\pi_1({Sp}(2n))=\mathbb{Z}$, this implies since there is only one conjugation class of subgroups of index $2$ in $\mathbb{Z}$, that there is up to isomorphism exactly one connected two-fold covering $\rho:Mp(2n) \rightarrow Sp(2n)$, fitting into the sequence
\[
1 \quad \rightarrow \quad \mathbb{Z}_{2} \quad \rightarrow \quad
Mp(2n) \quad \xrightarrow{\rho}  \quad Sp(2n) \quad
\rightarrow \quad 1.
\]
Following Weil, Segal and Shale (\cite{vergne}, \cite{wallach1}), $Mp(2n)\simeq Sp_2(2n)$ (we will prefer the notation $Mp(2n)$ in the context of its 'metaplectic' representation, described in what follows) admits a unitary, faithful representation $\kappa:Mp(2n)\rightarrow \mathcal{U}(L^2(\mathbb{R}^n))$. This can be constructed by lifting the projective representation of $Sp(2n)$ induced by intertwining the Schroedinger representation of the Heisenberg group to $Mp(2n)$. The representation $\kappa$ has the following explicit construction on the elements of three generating subgroups of $Mp(2n,\mathbb{R})$, as follows:
\begin{enumerate}
\item Let $g(A)=(det(A)^{\frac{1}{2}},\left(\begin{smallmatrix}A&0
\\ 0&(A^{t})^{-1}\end{smallmatrix} \right))$ where $A \in GL(n,\mathbb{R})$. To fix a root of $det(A)$ defines $g(A)$ as an element in $Mp(2n)$ and we have
\begin{equation}\label{metalinear}
(\kappa(g(A))f)(x)= det(A)^{\frac{1}{2}}f(A^{t}x), \ f \in L^2(\mathbb{R}^n).
\end{equation}
\item Let $B \in M(n,\mathbb{R})$ s.t. $B^{t} = B$, set $t(B)
= \left(\begin{smallmatrix}1&B \\0&1\end{smallmatrix} \right) \in
Sp(2n)$, then the set of these matrices is simply-connected. So $t(B)$ can be considered an element of
$Mp(2n)$, with $t(0)$ being the identity in $Mp(2n)$. Then one has
\begin{equation}\label{meta2}
(\kappa(t(B))f)(x) = e^{-\frac{i}{2}\langle Bx,x\rangle}f(x).
\end{equation}
\item Fixing the root $i^{\frac{1}{2}}$, we can consider
$\sigma=(i^{\frac{1}{2}},\left(\begin{smallmatrix}0&-1
\\1&0\end{smallmatrix}\right))$ as an element of $Mp(2n)$. Then
\begin{equation}\label{fourier}
(\kappa(\sigma)f)(x)=(\frac{i}{2\pi})^{\frac{n}{2}}\int_{\mathbb{R}^{n}}e^{i\langle
x,y\rangle}f(y)dy,
\end{equation}
so $\kappa(\sigma)= i^{\frac{n}{2}}F^{-1}$, where $F$ is the usual Fourier transform.
\end{enumerate}
Inspecting these formulas it is obvious that the metaplectic group $Mp(2n)$ acts bijectively on the Schwartz space $\mathcal{S}(\mathbb{R}^{n})$, so its closure extends to $\mathcal{U}(L^2(\mathbb{R}^n))$. We then have the following Theorem due to Wallach \cite{wallach1} (p. 193, Theorem 4.53) which gives a description of $\kappa$ on a certain subset of $Mp(2n)$ in terms of oscillatory integrals:
\begin{theorem}
Let $\mathcal{A}=\left(\begin{smallmatrix}A&B\\C&D\end{smallmatrix}\right)\in Sp(2n)$ s.t. ${\rm det}\ B\neq 0$, then
\begin{equation}\label{quadraticfourier}
\hat S_{W,m}f(x):=\kappa(({\rm det}^{-1/2}B,\mathcal{A}))f(x)=(\frac{1}{2\pi i})^{n/2}|{\rm det}(B)|^{-\frac{1}{2}}i^m\int_{\mathbb{R}^n}e^{2\pi iW(x,x')}f(x')dx',
\end{equation}
where $W(x,x')$ is the generating function associated to the quadratic form given by $\mathcal{A}$, that is $(x,p)=\mathcal{A}(x',p')$ if and only if $p=\partial_x W(x,x')$, $p'=-\partial_{x'}W(x,x')$.
\end{theorem}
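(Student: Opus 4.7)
The plan is to derive (\ref{quadraticfourier}) by writing $\mathcal{A}$ as a product of elements of the three generating subgroups whose images under $\kappa$ are given explicitly in (\ref{metalinear})--(\ref{fourier}), and then composing the corresponding operators on Schwartz functions. The condition $\det B\neq 0$ is precisely the open condition that makes such a factorization possible and that allows the graph Lagrangian of $\mathcal{A}$ to be parametrized by a generating function in the variables $(x,x')$.

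First, I would check that under $\det B\neq 0$ one has, setting $V_X:=\sigma\,t(-X)\,\sigma^{-1}=\bigl(\begin{smallmatrix}I&0\\X&I\end{smallmatrix}\bigr)$ for symmetric $X$, the factorization
\[
\mathcal{A}\;=\;V_{P}\cdot\sigma\cdot g(L)\cdot V_{Q},
\]
with $P=DB^{-1}$, $Q=B^{-1}A$ (both symmetric by the symplectic relations) and $L=-B^{-t}\in GL(n,\mathbb{R})$. This is a routine block-matrix computation using $D^tA-B^tC=I$ and the symmetry of $B^tD$ and $A^tC$. The associated generating function then works out to
\[
W(x,x')\;=\;\tfrac12\langle Px,x\rangle-\langle B^{-t}x',x\rangle+\tfrac12\langle Qx',x'\rangle,
\]
which satisfies $p=\partial_{x}W$, $p'=-\partial_{x'}W$ along $(x,p)=\mathcal{A}(x',p')$ by direct differentiation using the same symplectic identities (after the rescaling needed to match the $2\pi$ normalization of (\ref{quadraticfourier})).

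Second, I would apply $\kappa$ factor by factor. By (\ref{meta2}), $\kappa(t(-X))$ is multiplication by $e^{\frac{i}{2}\langle Xx,x\rangle}$, so $\kappa(V_X)=\kappa(\sigma)\kappa(t(-X))\kappa(\sigma)^{-1}$ is the Fourier-conjugate thereof. Combining this with (\ref{metalinear}) for $\kappa(g(L))$ and (\ref{fourier}) for $\kappa(\sigma)$, composing the four operators on $f\in\mathcal{S}(\mathbb{R}^n)$, and carrying out the unique remaining Fourier integral, one obtains an oscillatory integral whose kernel is $c\cdot e^{2\pi iW(x,x')}$ with $|c|=(2\pi)^{-n/2}|\det B|^{-1/2}$, the Jacobian $|\det L|^{1/2}=|\det B|^{-1/2}$ coming from the change of variable $x'\mapsto L^{t}x'$.

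The main obstacle is to determine the integer $m$ in $i^m$. The marker $\det^{-1/2}B$ on the left fixes a specific lift in $Mp(2n)$, whereas each $(\det L)^{1/2}$ and each $i^{n/2}$ entering the factorization carries its own sign ambiguity; these discrepancies collectively contribute a power of $i$. Since both sides of (\ref{quadraticfourier}) depend continuously on $\mathcal{A}$ on each connected component of the open set $\{\det B\neq 0\}\subset Sp(2n)$ and $\kappa$ is a well-defined unitary representation of $Mp(2n)$, the number $m$ is locally constant there; it is therefore pinned down by comparing both sides at a single representative per component (for instance $\mathcal{A}=\sigma$ itself, where the identity reduces immediately to (\ref{fourier})). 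A standard density argument finally extends the identity from $\mathcal{S}(\mathbb{R}^n)$ to all of $L^2(\mathbb{R}^n)$.
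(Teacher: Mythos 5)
The paper does not prove this statement; it is quoted as ``a Theorem due to Wallach'' with a citation to \cite{wallach1} (p.~193, Theorem 4.53), so there is no internal argument to compare your proposal against. Your outline is nevertheless a correct reconstruction of the standard derivation one finds in Wallach and in de Gosson; it is also essentially the same decomposition the paper imports shortly afterwards in (\ref{qdecomp}) from \cite{gosson}, just written with lower-triangular shears instead of upper-triangular ones. Your factorization $\mathcal{A}=V_P\,\sigma\,g(L)\,V_Q$ with $P=DB^{-1}$, $Q=B^{-1}A$, $L=-B^{-t}$ checks out: multiplying out the four matrices gives upper-left and upper-right blocks $A$ and $B$ at once, and lower-left block $DB^{-1}A-B^{-t}$, which equals $C$ by $A^tD-C^tB=I$ combined with the symmetry of $B^tD$; and $P,Q$ are symmetric by the remaining symplectic relations. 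Composing (\ref{metalinear})--(\ref{fourier}) along this factorization and doing the one surviving Gaussian integral indeed produces the kernel $e^{2\pi iW(x,x')}$ and the modulus $(2\pi)^{-n/2}|\det B|^{-1/2}$, up to a power of $i$.

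The only thin spot is the determination of $i^m$. Pinning it down by continuity is legitimate, but you should note that $\{\det B\neq 0\}\subset Sp(2n)$ has exactly two connected components, $\det B>0$ and $\det B<0$, and that $\sigma$, with $\det B=(-1)^n$, lands in only one of them, so at least a second anchor (e.g.\ $\sigma^{-1}$) is required. Alternatively one can avoid the topological input entirely and compute the phase directly from the Fresnel factor $e^{i\pi\,{\rm sign}(X)/4}$ of the integral $\int e^{\frac{i}{2}\langle Xy,y\rangle}e^{i\langle x,y\rangle}dy$ that arises in the composition; chasing these signatures through the factorization yields $i^m=\det^{-1/2}(B)/|\det B|^{-1/2}$ constructively, which is closer to Wallach's own argument. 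With either supplement, your proposal is a sound reconstruction of the cited result.
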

Note that here we fixed the root $i^{n/2}=(e^{i\pi/4})^n$ while the choice of the root ${\rm det}^{-1/2}(B)$ fixes the element in $Mp(2n)$ covering $\mathcal{A}$. Denote now $\hat J:=\kappa(\sigma^{-1})= i^{-\frac{n}{2}}F$, where we fix again $i^{n/2}=(e^{i\pi/4})^n$. Furthermore, write for $A$ as in (\ref{metalinear}) $\kappa(g(A),m)= |{\rm det}(A)|^{1/2}i^{m}f(A^{t}x)$, where $m\in \mathbb{Z}$ and $|{\rm det}(A)|^{1/2}$ (as already in (\ref{quadraticfourier})) denotes the positive root of $|{\rm det}(A)|$. Then for $P,Q \in {\rm M}(n,\mathbb{R})$, s.t. $P=P^t$, $Q=Q^t$ and $L\in {\rm GL}(n,\mathbb{R})$ we define the quadratic form
\begin{equation}\label{generatingfunc}
W(x,x')=\frac{1}{2}\langle Px,x\rangle-\langle Lx,x'\rangle +\frac{1}{2}\langle Qx',x'\rangle ,
\end{equation}
where $\langle\cdot,\cdot\rangle$ denotes the standard scalar product on $\mathbb{R}^n$. We will use the notation $W=(P,L,Q)$ to refer to a quadratic form of the form (\ref{generatingfunc}) in the following. Then, by a result of de Gosson (\cite{gosson}, Prop. 7.2), the 'quadratic Fourier transform' $\hat S_{W,m}$ can be decomposed as
\begin{equation}\label{qdecomp}
\hat S_{W,m}=\kappa(({\rm det}^{1/2}(L),S_W)=\kappa(t(P))\kappa(g(L),m)\hat J\kappa(t(Q)),\ {\rm where}\ S_{W}:=\left(\begin{smallmatrix}L^{-1}Q&L^{-1}\\PL^{-1}Q-L^T&L^{-1}P\end{smallmatrix}\right),
\end{equation}
where here, $|{\rm det}(L)|^{1/2}i^m={\rm det}^{1/2}(L)$. The next Theorem identifies the Maslov index $\mu_{L_0,2}$ on $Sp_2(2n)$, where $L_0=\{0\}\times \mathbb{R}^n$, as introduced in Lemma \ref{modmaslov}, with an index defined on the group generated by the set $\hat S_{W,m}$, for $W$ as in (\ref{generatingfunc}). This group turns out to be $Mp(2n)$.
\begin{theorem}\label{sympdecomp}
The image $\kappa(Mp(2n))\subset \mathcal{U}(L^2(\mathbb{R}^n))$ is generated by the set $\hat S_{W,m}$, $W$ being of the form (\ref{generatingfunc}). Any element $\hat S\in Mp(2n)$ can be (non-uniquely) written as 
\begin{equation}\label{decomp2}
\hat S=\hat S_{W,m}\hat S_{W',m'},
\end{equation}
where $W,W'$ are of the form (\ref{generatingfunc}). Then setting $\hat \mu(\hat S_{W,m})=2m-n\ {\rm mod}\ 8$ for any 'quadratic Fourier transform' $\hat S_{W,m}$ as defined in (\ref{quadraticfourier}), the integer
\begin{equation}\label{decomp3}
\hat \mu(\hat S):=\hat \mu(\hat S_{W,m})+\hat \mu(\hat S_{W',m'}) +\widehat{{\rm sign}}(P'+Q)
\end{equation}
where $(\hat \cdot)$ denotes the image in $\mathbb{Z}_8$ and ${\rm sign}$ the signature of a quadratic form, is well-defined and independent of the choice of $(W,m), (W',m')$. Furthermore, assuming that $\hat S\in \kappa(Mp(2n))$ maps to $S_2 \in Sp_2(2n)$ w.r.t. the identification $\kappa(Mp(2n))\simeq Sp_2(2n)$, we have
\begin{equation}\label{masloveq2}
\hat \mu(\hat S)=\mu_{L_0,2}(S_2),
\end{equation}
using the index $\mu_{L_0,2}:Sp_2(2n)\rightarrow \mathbb{Z}_8$ introduced in Lemma \ref{modmaslov}.
\end{theorem}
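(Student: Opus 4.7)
The plan is to derive the four assertions in sequence, exploiting the explicit decomposition (\ref{qdecomp}) together with the uniqueness properties of $\mu_{L_0,2}$ from Lemma \ref{modmaslov}.

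First I would settle the generation and decomposition statements. By (\ref{qdecomp}) each $\hat S_{W,m}$ lies in $\kappa(Mp(2n))$, and one knows that the three families $\kappa(t(B))$, $\kappa(g(A),m)$, $\hat J$ together generate $\kappa(Mp(2n))$. Conversely, specializing $P=Q=0$ in (\ref{qdecomp}) yields $\kappa(g(L),m)\hat J$, while varying $P$ and $Q$ absorbs arbitrary $t$-factors on either side, so $\{\hat S_{W,m}\}$ realizes all generators. For (\ref{decomp2}) I would express an arbitrary $\hat S \in Mp(2n)$ as a word in the three generators and then collapse it into at most two quadratic Fourier transforms by moving $t$-blocks across $\hat J$-blocks using the commutation relations in $Sp(2n)$; this is essentially a Bruhat-style argument together with the identity (\ref{qdecomp}) read right-to-left.

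Next I would establish that $\hat\mu(\hat S)$ is well-defined modulo $8$. The key ingredient is the composition law for quadratic Fourier transforms: inserting (\ref{quadraticfourier}) twice and performing the Gaussian integration in the intermediate variable produces a stationary-phase prefactor of shape $e^{i\pi\,\mathrm{sign}(P'+Q)/4}$, which upon being absorbed into the integer part of the root $i^{\,\cdot}$ is precisely the signature correction $\widehat{\mathrm{sign}}(P'+Q)$ appearing in (\ref{decomp3}). Independence from the choice of $(W,m),(W',m')$ representing $\hat S$ then reduces to comparing two such decompositions $\hat S_{W_1,m_1}\hat S_{W_1',m_1'}=\hat S_{W_2,m_2}\hat S_{W_2',m_2'}$ and invoking the cocycle identity for the triple Maslov (Kashiwara) index $\tau$ which already characterized $\mu$ in Section \ref{section1}.

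For the identification (\ref{masloveq2}) I would verify that $\hat \mu$ and $\mu_{L_0,2}$ agree on each of the generators $\kappa(t(B))$, $\kappa(g(A),m)$, $\hat J$ by direct computation from (\ref{metalinear})--(\ref{quadraticfourier}), and that both satisfy the same two-cocycle relation with signature correction $\widehat{\mathrm{sign}}(P'+Q)$. Since, by Lemma \ref{cappelmaslov} combined with Lemma \ref{modmaslov}, the index $\mu_{L_0,2}$ is already pinned down by these data, the two indices must coincide. I expect the main obstacle to be the careful bookkeeping of the roots $i^m$ and $\det^{1/2}(L)$ through the Gaussian composition: a sign error there is invisible at the level of $Sp(2n)$ but fatal modulo $8$. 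Since most of this analysis is carried out in detail in de Gosson's book \cite{gosson}, I would largely cite that treatment and limit my own exposition to checking that the normalizations in (\ref{quadraticfourier})--(\ref{generatingfunc}) match his conventions.
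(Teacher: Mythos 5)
Your proposal is essentially the paper's own approach: the author disposes of the generation claim by invoking the decomposition (\ref{qdecomp}) together with the explicit formulas (\ref{metalinear})--(\ref{fourier}), and for all remaining assertions simply cites de Gosson (Prop.\ 7.2 for (\ref{decomp2}), Thm.\ 7.22 for (\ref{decomp3}), Cor.\ 7.30 for (\ref{masloveq2})), which is precisely what you land on as well. The extra material you sketch — collapsing words in the generators into two quadratic Fourier transforms by Bruhat-style moves, the stationary-phase Gaussian composition yielding the $\widehat{\mathrm{sign}}(P'+Q)$ correction, and the uniqueness argument via the Kashiwara cocycle — is a faithful account of what de Gosson actually does, so your proof is not a different route but a partial unfolding of the citation.

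One small caution in your final step: you propose to verify agreement of $\hat\mu$ and $\mu_{L_0,2}$ on the generators $\kappa(t(B))$, $\kappa(g(A),m)$, $\hat J$; but $\hat\mu$ is only defined directly on the quadratic Fourier transforms $\hat S_{W,m}$ (whose symplectic blocks satisfy $\det B\neq 0$), and $t(B)$, $g(A)$ are not of that form. The comparison must therefore be made on the $\hat S_{W,m}$ themselves, with the cocycle relation (\ref{decomp3}) then extending the identity to products. This is the order de Gosson's argument runs in and is also forced by the structure of Lemma \ref{modmaslov}; as stated your sketch would not literally go through, though the fix is routine.
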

\begin{proof}
That $\kappa(Mp(2n))$ is generated by the 'quadratic Fourier transforms' $\hat S_{W,m}$ follows immediately from the decomposition (\ref{qdecomp}) and the formulas given for $\kappa$ in (\ref{metalinear}) to (\ref{fourier}). All other assertions, namely (\ref{decomp2}), (\ref{decomp3}) and (\ref{masloveq2}) are proven by Gosson in \cite{gosson} (Prop. 7.2, Theorem 7.22 and Corollary 7.30, respectively).
\end{proof}
Let now $(M,\omega)$ be a symplectic manifold of dimension $2n$. For $p\in M$ we denote by $R_p$ the set of symplectic bases in $T_pM$, that is the $2n$-tuples $e_{1}, \dots,e_{n},f_{1}, \dots, f_{n}$
so that
\[
\omega_{x}(e_{j},e_{k})=\omega_{x}(f_{j},f_{k}) = 0, \ 
\omega_{x}(e_{j},f_{k}) = \delta_{jk} \quad \textrm{for} \ j,k =1,\dots,2n.
\]
The symplectic group $Sp(2n)$ acts simply transitively on $R_p, \ p \in M$ and we denote by $\pi_{R}:R:=\bigcup_{p\in m} R_p \rightarrow M$ the symplectic frame bundle. By the Darboux Theorem $R$ it is a  locally trivial $Sp(2n)$-principal fibre bundle on $M$. As it is well-known (see \cite{duff}), the $\omega$-compatible almost complex structures $J$ are in bijective correspondence with the set of $U(n)$-reductions of $R$. Given such a $J$, we call local sections of the associated $U(n)$-reduction $R^J$ of the form $(e_1,\dots, e_n, f_1,\dots,f_n)$ unitary frames. These frames are characterized by
\[
g(e_j,e_k)=\delta_{jk} \quad g(e_j,f_k)=0, \quad Je_j=f_j,
\]
where $j,k=1,\dots,n$ and $g(\cdot,\cdot)=\omega(\cdot,J\cdot)$. Now a metaplectic structure of $(M,\omega)$ is a $\rho$-equivariant $Mp(2n)$-reduction of $R$, that is:
\begin{Def}\label{metaplecticst}
A pair $(P,f)$, where $\pi_P:P\rightarrow M$ is a $Mp(2n,\mathbb{R})$-principal bundle on $M$ and $f$ a bundle morphism $f:P \rightarrow R$, is called metaplectic structure of $(M,\omega)$, if the following diagram commutes:
\begin{equation}\label{metadiag}
\begin{CD}
P \times Mp(2n,\mathbb{R})  @>>> P \\
@VV{f\times\rho}V              @VV{f}V \\
R \times Sp(2n,\mathbb{R}) @>>>   R
\end{CD}
\end{equation}
where the horizontal arrows denote the respective group actions.
\end{Def}
It follows that $f:P\rightarrow R$ is a two-fold connected covering. Furthermore it is known (\cite{habermann}, \cite{kost}) that $(M,\omega)$ admits metaplectic structure if and only if $c_{1}(M)=0 \ mod \ 2$. In that case, the isomorphism classes of metaplectic structures are classified by $H^{1}(M,\mathbb{Z}_{2})$. $\kappa$ defines a continuous left-action of $Mp(2n,\mathbb{R})$ on $L^{2}(\mathbb{R}^n)$, acting unitarily on $L^{2}(\mathbb{R}^{n})$. Combining this with the right-action of $Mp(2n)$ on a fixed metaplectic structure $P$, we get a continuous right-action on $P\times L^{2}(\mathbb{R}^n)$ by setting
\[
\begin{split}
(P \times L^{2}(\mathbb{R}^{n})) \times Mp(2n) \ &\rightarrow \
P \times L^{2}(\mathbb{R}^{n}) \\
((p,f),g) \ &\mapsto \ (pg,\kappa(g^{-1})f).
\end{split}
\]
The symplectic spinor bundle $\mathcal{Q}$ is defined to be its orbit space
\[
\mathcal{Q}=P \times_{\kappa}L^{2}(\mathbb{R}^{n}) := (P \times
L^{2}(\mathbb{R}^{n}))/Mp(2n)
\]
w.r.t. this group action, so $\mathcal{Q}$ is the $\kappa$-associated vector bundle of $P$. We will refer to its elements in the following by  $[p,u]$, $p\in P$, $u\in L^{2}(\mathbb{R}^{n})$. Note that if $\pi_P$ is the projection $\pi_P: P \rightarrow M$ in $P$, then $\mathcal{Q}$ is a locally trivial fibration $\tilde \pi: \mathcal{Q} \rightarrow M$ with fibre $L^{2}(\mathbb{R}^{n})$ by setting $\tilde \pi([p,u])= x$ if $\pi_P(p) = x$. Note that continuous sections $\phi$ in $\mathcal{Q}$ correspond to continuous $Mp(2n)$-equivariant mappings $\hat \phi:P\rightarrow L^2(\mathbb{R}^n)$, that is $\hat \phi(pq)=\kappa(q^{-1})\hat \phi(p)$ for $p\in P$. Hence we define {\it smooth} sections $\Gamma(\mathcal{Q})$ in $\mathcal{Q}$ as the continuous sections whose corresponding mapping $\hat \phi$ is smooth as a map $\hat \phi:P\rightarrow L^2(\mathbb{R}^n)$. It then follows (\cite{habermann}) that $\hat \phi(p)\in \mathcal{S}(\mathbb{R}^n)$ for all $p\in P$, so smooth sections in $\mathcal{Q}$ are in fact sections of the subbundle
\[
\mathcal{S} = P \times_{\kappa}\mathcal{S}(\mathbb{R}^{n}).
\]
Given a $U(n)$-reduction $R^J$ of $R$ w.r.t. a compatible almost complex structure $J$ on $M$ and a fixed metaplectic structure $P$, we get a $\hat U(n):=\rho^{-1}(U(n))$-reduction $\pi_{P^J}:P^J \rightarrow M$ of $P$ by setting $P^J:=f^{-1}(R^J)$, where $f$ is as in Definition \ref{metaplecticst}. So we get by denoting the restriction of $\kappa$ to $\hat U(n)$ by $\tilde \kappa$ an isomorphism of vectorbundles
\begin{equation}\label{spinorbundle}
\mathcal{Q}\simeq \mathcal{Q}^J:=P^J \times_{\tilde \kappa}L^{2}(\mathbb{R}^{n}).
\end{equation}
Correspondingly we define $\mathcal{S}^J$ so that $\mathcal{S}^J\simeq \mathcal{S}$. At this point, the Hamilton operator $H_0$ of the harmonic oscillator on $L^2(\mathbb{R}^n)$ gives rise to an endomorphism of $\mathcal{S}$ and a splitting of $\mathcal{Q}$ into finite-rank subbundles as follows. Let $H_0:\mathcal{S}(\mathbb{R}^n)\rightarrow \mathcal{S}(\mathbb{R}^n)$ be the Hamilton operator of the $n$-dimensional harmonic oscillator as given by
\[
(H_0u)(x)=-\frac{1}{2}\sum_{j=1}^{n}(x_{j}^{2}u-\frac{\partial^{2}u}{\partial x_{j}^{2}}), \ u \in \mathcal{S}(\mathbb{R}^n).
\]
\begin{prop}[\cite{habermann}]\label{decomp}
The bundle endomorphism $\mathcal{H}^J:\mathcal{S}^J\rightarrow\mathcal{S}^J$ declared by $\mathcal{H}^J([p,u])=[p,H_0u],\ p\in P, u\in \mathcal{S}(\mathbb{R}^n)$ is well-defined. Let $\mathcal{M}_l$ denote the eigenspace of $H_0$ with eigenvalue $-(l+\frac{n}{2})$. Then the spaces $\mathcal{M}_l,\ l \in \mathbb{N}_0$ form an orthogonal decomposition of $L^2(\mathbb{R}^n)$ which is $\tilde \kappa$-invariant. So $\mathcal{Q}^J$ decomposes into the direct sum of finite rank-subbundles
\[
\mathcal{Q}^J_l=P^J \times_{\tilde \kappa}\mathcal{M}_l, \quad {\rm s.t.}\ {\rm rank}_{\mathbb{C}}\mathcal{Q}^J_{k}=\left(\begin{matrix}n+k-1 \\ k \end{matrix}\right)
\]
where we defined $\mathcal{Q}^J_l=\{q \in \mathcal{S}:\mathcal{H}^J(q)=-(l+\frac{n}{2})q\}$. 
\end{prop}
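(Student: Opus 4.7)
The proof breaks naturally into three pieces: well-definedness of $\mathcal{H}^J$, the classical spectral theory of $H_0$, and the induced bundle decomposition. My plan is to treat them in that order, since the second and third parts are essentially automatic once the first is settled.

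First, I would show that $\mathcal{H}^J$ descends to the quotient $P^J \times_{\tilde \kappa} \mathcal{S}(\mathbb{R}^n)$. Since $[pg, \tilde\kappa(g^{-1})u] = [p,u]$ for $g \in \hat U(n)$, the assignment $[p,u] \mapsto [p, H_0 u]$ is well-defined exactly when $H_0$ commutes with every $\tilde\kappa(g)$. The cleanest route to this commutativity is infinitesimal: under the embedding $A+iB \mapsto \bigl(\begin{smallmatrix} A&-B\\B&A\end{smallmatrix}\bigr)$ of $\mathfrak{u}(n)$ into $\mathfrak{sp}(2n)$, each $X \in \mathfrak{u}(n)$ has quadratic Weyl symbol $q_X(x,p)$, and the operator $d\kappa(X)$ is the Weyl quantization of $q_X$ (this is the defining property of the derived metaplectic representation). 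The symbol of $H_0$ is $h(x,p) = \tfrac12(|x|^2+|p|^2)$, which is manifestly invariant under the natural $U(n)$-action on $(\mathbb{R}^{2n},J)$, so $\{h,q_X\}=0$. Since the Poisson bracket of two quadratic symbols coincides exactly with the commutator of their Weyl quantizations, we obtain $[H_0, d\kappa(X)] = 0$ for all $X \in \mathfrak{u}(n)$, and exponentiation together with connectedness of $\hat U(n)$ yields $[H_0,\tilde\kappa(g)]=0$ for all $g \in \hat U(n)$. (As an alternative pedestrian check, one can combine the explicit formulas (\ref{metalinear})--(\ref{fourier}) and the decomposition (\ref{qdecomp}) applied to $\hat U(n)$-elements.)

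Next, I would invoke the classical Hermite expansion for $H_0$ on $L^2(\mathbb{R}^n)$: the products $h_\alpha(x) = h_{\alpha_1}(x_1)\cdots h_{\alpha_n}(x_n)$ of one-dimensional Hermite functions form a complete orthonormal system, and $H_0 h_\alpha = -(|\alpha|+n/2) h_\alpha$. Hence $\mathcal{M}_l = \operatorname{span}\{h_\alpha : |\alpha| = l\}$ is the eigenspace for eigenvalue $-(l+n/2)$, with complex dimension equal to the number of multi-indices $\alpha \in \mathbb{N}_0^n$ of total degree $l$, namely $\binom{n+l-1}{l}$. Self-adjointness of $H_0$ gives pairwise orthogonality, and completeness of the Hermite basis yields $L^2(\mathbb{R}^n) = \widehat{\bigoplus}_{l \geq 0}\mathcal{M}_l$; moreover each $\mathcal{M}_l$ is contained in $\mathcal{S}(\mathbb{R}^n)$.

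Finally, the commutativity established in the first step shows that every $\mathcal{M}_l$ is $\tilde\kappa$-stable, so $\mathcal{Q}^J_l := P^J \times_{\tilde \kappa} \mathcal{M}_l$ is a well-defined complex subbundle of $\mathcal{Q}^J$ of rank $\binom{n+l-1}{l}$, and fibrewise the Hermite decomposition delivers $\mathcal{Q}^J = \bigoplus_{l \geq 0}\mathcal{Q}^J_l$; the alternative description as $\{q \in \mathcal{S}^J : \mathcal{H}^J q = -(l+n/2)q\}$ follows directly from the definition of $\mathcal{H}^J$. The main obstacle in this argument is really the first step, i.e.\ establishing $[H_0, \tilde\kappa(g)] = 0$; once phrased via Weyl symbols and the $U(n)$-invariance of $|x|^2+|p|^2$ it becomes transparent, but without that conceptual input one has to work through the three generator formulas (\ref{metalinear})--(\ref{fourier}) case by case, which is possible but tedious.
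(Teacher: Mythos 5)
Your argument is correct and follows essentially the same route as the paper: the paper identifies $H_0$ with the unique $j\in\mathfrak{mp}(2n)$ satisfying $\rho_*(j)=-J$ and then observes that $J$ commutes with $\mathfrak{u}(n)$ as given in (\ref{lie}), which is exactly your $U(n)$-invariance of the symbol $\tfrac12(|x|^2+|p|^2)$ translated from the Weyl-quantization picture into the Lie-algebra picture, and both proofs then defer the eigenspace structure of $H_0$ to the standard Hermite-function theory. The only cosmetic difference is that you phrase the commutativity via Poisson brackets of quadratic symbols rather than via the isomorphism $\rho_*:\mathfrak{mp}(2n)\to\mathfrak{sp}(2n)$, which is a matter of presentation, not substance.
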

\begin{proof}
It is well-known (see \cite{wallach1}, \cite{habermann}) that $H_0$ can be identified with the unique element $j \in {\mathfrak mp}(2n)$ that satisfies $\rho_*(j)=-J \in {\mathfrak sp}(2n)$. Here, $J$ denotes the standard complex structure on $\mathbb{R}^{2n}$ and  ${\mathfrak mp}(2n)$ the Lie algebra of $Mp(2n)$. Then one sees that $J$ commutes with all elements of the Lie algebra of $U(n)$, as given by
\begin{equation}\label{lie}\mathfrak{u}(n)=\lbrace X \in
\mathfrak{gl}(2n,\mathbb{R}):XJ=JX,\ X^{t}+X=0\rbrace. 
\end{equation}
Consequently, $H_0$ factors to a bundle endomorphism $\mathcal{H}^J$ and the other assertions follow from known results on the eigenspaces of $H_0$ on $L^2(\mathbb{R}^n)$ (see \cite{wallach1}).
\end{proof}
To prove Theorem \ref{theorem2}, we will have to define the dual spinor bundle $Q'$ of $Q$. To do this, note that if we topologize the Schwartz space $\mathcal{S}(\mathbb{R}^{n})$ by the countable family of semi-norms
\[
p_{\alpha,m}(f)={\rm sup}_{x\in \mathbb{R}^n}(1+|x|^m)|(D^\alpha f)(x)|, \  f\in \mathcal{S}(\mathbb{R}^{n}),
\]
then the topology of $(\mathcal{S}(\mathbb{R}^{n}),\tau)$ is induced by a translation-invariant complete metric $\tau$, hence manifests the structure of a Frechet-space. Furthermore $\kappa:Mp(2n)\rightarrow \mathcal{U}(\mathcal{S}(\mathbb{R}^{n}))$ still acts continuously, which follows by the decomposition (\ref{metalinear})-(\ref{fourier}) and the fact that multiplication by monomials and Fourier transform act continuously w.r.t. $\tau$, which is a standard result (see \cite{mueller}). Then, denoting the dual space of $(\mathcal{S}(\mathbb{R}^{n}),\tau)$ as $\mathcal{S}'(\mathbb{R}^{n})$, we can consider for any pair $T \in \mathcal{S}'(\mathbb{R}^{n}), g \in Mp(2n)$ the continuous linear functional $\hat \kappa(g)(T) \in \mathcal{S}'(\mathbb{R}^{n})$ defined by
\begin{equation}\label{metadual}
(\hat \kappa(g)(T))(f)=T(\kappa(g)^*f), \ f \in \mathcal{S}(\mathbb{R}^{n}).
\end{equation}
Thus we have an action $\hat \kappa:Mp(2n)\times \mathcal{S}'(\mathbb{R}^{n})\rightarrow \mathcal{S}'(\mathbb{R}^{n})$ which extends $\kappa:Mp(2n)\rightarrow \mathcal{U}(\mathcal{S}(\mathbb{R}^{n}))$ and is continuous relative to the weak-$*$-topology on $\mathcal{S}'(\mathbb{R}^{n})$. Note that since the inclusion $i_1:\mathcal{S}(\mathbb{R}^{n})\subset L^2(\mathbb{R}^n)$ is continuous, we have the continuous triple of embeddings $\mathcal{S}(\mathbb{R}^{n})\subset L^2(\mathbb{R}^n) \subset \mathcal{S}'(\mathbb{R}^{n})$. Here $L^2(\mathbb{R}^n)$ carries the norm topology and the inclusion $i_2:L^2(\mathbb{R}^n) \hookrightarrow \mathcal{S}'(\mathbb{R}^{n})$ is given by $i_2(f)(u)=(f,\overline u)_{L^2(\mathbb{R}^n)}$ where the latter denotes the usual $L^2$-inner product on $\mathbb{R}^n$. We thus define in analogy to (\ref{spinorbundle})
\[
\mathcal{Q}'=P^J \times_{\hat \kappa}\mathcal{S}'(\mathbb{R}^{n}),
\]
where here, $\hat \kappa:U(n)\rightarrow {\rm Aut}(\mathcal{S}'(\mathbb{R}^{n}))$ means restriction of $\hat \kappa$ to $U(n)$ (using the same symbol). Now any fixed section $\varphi \in \Gamma(\mathcal{Q}')$ may be evaluated on any $\psi \in \Gamma(\mathcal{Q})$ by writing $\varphi=[\overline s, T], \psi=[\overline s, u]$ w.r.t. a local section $\overline s: U\subset M\rightarrow P^J$ and smooth mappings $T:U\rightarrow \mathcal{S}'(\mathbb{R}^{n})$, $u: U\rightarrow \mathcal{S}(\mathbb{R}^{n})$ by setting 
\[
\varphi(\psi)|U(x)= T(u)(x), \ x \in U\subset M.
\]
It is clear that this extends to a mapping $\varphi:\Gamma(\mathcal{Q})\rightarrow C^\infty(M)$. Furthermore, for any $p \in P^J, x\in M$ s.t. $\pi_{P^J}(p)=x$, we can define $\delta_{p} \in \mathcal{Q}'_x$ which assigns to any $\psi =[p, u]\in \mathcal{Q}_x$ the value
\begin{equation}\label{delta}
\delta_{p}(\psi):= \delta(0)(u),
\end{equation}
where $\delta(0)\in \mathcal{S}'(\mathbb{R}^{n})$ is the linear functional $\delta_0(u)=u(0), \ u \in \mathcal{S}(\mathbb{R}^n)$. Note that $\delta_{p}$ depends on $p$ and, unless $P^J$ has a global section, there is not necessarily a smooth extension of $\delta_{p}$ to an element of $\Gamma(\mathcal{Q}')$ so that over any point $x \in M$ (\ref{delta}) holds for some $p \in P^J_x$. Nevertheless, given some connection $\overline Z:P^J\rightarrow \mathfrak{u}(n)$ the associated parallel transport $\mathcal{P}^{\overline Z}_\gamma(t):P^J_{\gamma(0)}\rightarrow P^J_{\gamma(t)},\ t \in [0,1]$ along $\gamma:[0,1]\rightarrow M$ enables to extend $\delta_{p}$ along $\gamma$ to a section $\delta_{\gamma,p}\in \Gamma(\gamma^*(\mathcal{Q}'))$ by setting
\[
\delta_{\gamma,p}(t)=[\mathcal{P}^{\overline Z}_{\gamma}(t)(p),\delta(0)] \in \gamma^*(\mathcal{Q}')_{\gamma(t)}.
\]
Here, $\gamma^*(\mathcal{Q}')$ denotes the pull-back by $\gamma:[0,1]\rightarrow M$ of the bundle $\mathcal{Q}'$.

\section{Proof of the Theorems}\label{proofs}
In the following, let $(M,\omega)=(\mathbb{R}^{2n},\omega_0)$, using the notation from Section \ref{maslov} and let $i:L \hookrightarrow \mathbb{R}^{2n}$, be an embedded Lagrangian submanifold, that is $i^*\omega=0$. Denote $J$ the standard complex structure on $\mathbb{R}^{2n}$ and $Q^J$ the symplectic spinor bundle associated to the $\hat U(n)$-reduction $f^J:P^J\rightarrow R^J$ of the trivial metaplectic structure $f:P \rightarrow R$ on $\mathbb{R}^{2n}$ (note that since $c_1(\mathbb{R}^{2n},\omega)=0$, there is only this structure up to isomorphism). We first note that the bundles $\pi^L_R:i^*R^J\rightarrow L$ resp. $\pi^L_P:i^*P^J\rightarrow L$ allow a further reduction to $O(n)$ resp. $\hat O(n)=\rho^{-1}(O(n))$ induced by the inclusion
\begin{equation}\label{orth}
\hat i:O(n)\hookrightarrow U(n)=Sp(2n)\cap O(2n),\quad A \mapsto \left(\begin{smallmatrix}A
&0\\0&A\end{smallmatrix} \right),
\end{equation}
where $A \in M(n,\mathbb{R}),\ A^tA=I$. Denote $g(\cdot,\cdot)=\omega(\cdot,J\cdot)$ the metric induced on $L$ by $(J,\omega)$, which is simply the restriction of the standard metric to $L$.
\begin{lemma}\label{onred}
There is an $O(n)$-reduction $(\hat R_L,\pi^L_{R},L,O(n))$ of the principal bundle $(i^*R^J, \pi_R|L, L, U(n))$ which is induced by the inclusion (\ref{orth}). This reduction gives rise to an $\hat O(n)=\rho^{-1}(O(n))$-reduction $(\hat P_L,\pi_{P}^L, L, \hat O(n))$ of $(i^*P^J, \pi_P|L, L, \hat U(n))$ on $L$ by setting $\hat P_L=f^{-1}(\hat R_L)$ so that the diagram
\begin{equation}\label{metadiag2}
\begin{CD}
\hat P_L \times \hat O(n)  @>>> \hat P_L \\
@VV{\hat f\times \hat \rho}V              @VV{\hat f}V \\
\hat R_L \times O(n) @>>>   \hat R_L
\end{CD}
\end{equation}
commutes. Here, $\hat f$ and $\rho$ denote appropriate restrictions of $f:P\rightarrow R$ and $\rho:Mp(2n)\rightarrow Sp(n)$ as defined above.
\end{lemma}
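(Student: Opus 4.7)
The plan is to first construct $\hat R_L$ directly from the geometry of the Lagrangian embedding, then obtain $\hat P_L$ by pullback under $f$ and verify it has the required structure via the covering property.

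First I would build $\hat R_L$ pointwise. At each $x\in L$, the subspace $i_*T_xL\subset\mathbb{R}^{2n}$ is Lagrangian, and since $J$ is compatible with $\omega_0$ and the Euclidean metric, $J(i_*T_xL)$ coincides with the Euclidean orthogonal complement of $i_*T_xL$. Given any $g$-orthonormal basis $(e_1,\dots,e_n)$ of $T_xL$, the tuple $(i_*e_1,\dots,i_*e_n,\,Ji_*e_1,\dots,Ji_*e_n)$ satisfies the defining relations for a unitary frame in $R^J_{i(x)}$, using $g(\cdot,\cdot)=\omega_0(\cdot,J\cdot)$ and the Lagrangian/orthogonality observation above. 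Declaring $\hat R_L\subset i^*R^J$ to be the set of all such frames, the fact that the change of $g$-orthonormal frames in $T_xL$ is governed by $O(n)$ translates precisely, under the embedding $\hat i\colon O(n)\hookrightarrow U(n)$ of (\ref{orth}), to the induced right action on $i^*R^J$. Local triviality is immediate from local $g$-orthonormal frames on $L$, so $\hat R_L\to L$ is an $O(n)$-principal subbundle reducing the structure group of $i^*R^J$.

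Next I would define $\hat P_L:=f^{-1}(\hat R_L)\subset i^*P^J$ and check it is a principal $\hat O(n)$-bundle. The subgroup $\hat O(n)=\rho^{-1}(O(n))\subset\hat U(n)$ preserves $\hat P_L$: if $p\in\hat P_L$ and $q\in\hat O(n)$, then by the equivariance of $f$ in (\ref{metadiag}),
\[
f(p\cdot q)=f(p)\cdot\rho(q)\in\hat R_L\cdot O(n)=\hat R_L,
\]
so $p\cdot q\in\hat P_L$. Freeness of the $\hat O(n)$-action is inherited from the free action of $Mp(2n)$ on $P$; transitivity on fibres is checked by taking $p_1,p_2\in(\hat P_L)_x$ with unique $g\in Mp(2n)$ such that $p_1\cdot g=p_2$, whence $f(p_1)\cdot\rho(g)=f(p_2)$ lies in $\hat R_L$, forcing $\rho(g)\in O(n)$ and thus $g\in\hat O(n)$. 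Local triviality of $\hat P_L$ follows from that of $\hat R_L$ together with the two-fold cover $f$: a local section $s\colon U\to\hat R_L$ lifts (after possibly shrinking $U$ to a simply connected neighbourhood) to a section $\tilde s\colon U\to\hat P_L$, giving a local trivialisation compatible with the covering $\hat O(n)\to O(n)$.

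Finally, the commutativity of the diagram (\ref{metadiag2}) is nothing but the restriction of the commuting diagram (\ref{metadiag}) defining the metaplectic structure to the subbundles $\hat P_L\subset i^*P^J$, $\hat R_L\subset i^*R^J$ and to the subgroups $\hat O(n)\subset Mp(2n)$, $O(n)\subset Sp(2n)$; this restriction is well-defined precisely because of the invariance under the respective group actions established in the previous step. The only genuinely non-formal point in this plan is the geometric identification $J(i_*T_xL)=(i_*T_xL)^{\perp}$, which is what allows a $g$-orthonormal $L$-frame to be extended to an honest unitary frame via $f_j=Je_j$; once this is in place, the rest is a mechanical verification of principal bundle axioms under the two-fold cover $f$.
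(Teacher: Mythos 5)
Your proposal is correct and slightly more thorough than the paper's own argument. The paper's proof proceeds via the standard criterion that a reduction of structure group from $U(n)$ to $O(n)$ exists if and only if the associated bundle $\hat R\times_{U(n)}U(n)/O(n)$ admits a global section; it constructs such a section by the local formula $\phi(x)=[s(x),1]$ where $s(x)=(e_1,\dots,e_n,Je_1,\dots,Je_n)$ for a local $g$-orthonormal frame $(e_j)$ of $TL$, and then observes that this section is globally well-defined because changing the orthonormal frame alters $s(x)$ only by an $O(n)$-element, which acts trivially on $1\in U(n)/O(n)$. You instead construct the reduction directly as a subbundle $\hat R_L\subset i^*R^J$ and verify the principal-bundle axioms by hand; the two arguments are logically equivalent since a section of $\hat R\times_{U(n)}U(n)/O(n)$ is nothing but an $O(n)$-orbit of frames over each point, exactly the set you build. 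Both rest on the same geometric fact, which you correctly identify and justify: for a Lagrangian subspace $i_*T_xL$ one has $J(i_*T_xL)=(i_*T_xL)^{\perp_g}$, so appending the $J$-images of an orthonormal basis produces a unitary frame. Where your write-up improves on the paper is in the second half: the paper states that $\hat P_L=f^{-1}(\hat R_L)$ gives the required $\hat O(n)$-reduction and that the diagram commutes, but offers no verification; your checks of invariance of $\hat P_L$ under $\hat O(n)$ via equivariance of $f$, of freeness and fibrewise transitivity inherited from $Mp(2n)$, and of local triviality via lifting sections through the double cover are exactly the details the paper suppresses, and they are all correct.
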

\begin{proof}
To show that the $U(n)$-bundle $\hat R:=i^*R^J$ over $L$ allows the asserted $O(n)$-reduction, we have to show that the bundle $\hat R\times_{U(n)}U(n)/O(n)$ allows a global section over $L$. But this is determined by setting locally for $v \in U\subset L$
\[
\phi(x)=[s(x), 1],\ x \in L,
\]
where $s(x)\in \hat R, \pi_R(s(x))=x$ and $s(x)=(e_1(x),\dots,e_n(x), Je_1,\dots,e_n(x))$, where $(e_1,\dots, e_n)$ is some local orthonormal basis on $L$ and $1$ denotes the identity in $U(n)/O(n)$. It is clear that $\phi$ declares a well-defined globally non-vanishing section of $\hat R\times_{U(n)}U(n)/O(n)$.
\end{proof}
Let now $Z^L:TR_L \rightarrow \mathfrak{o}(n,\mathbb{R})$ be the connection on $R_L$, where $R_L$ is the $O(n)$-bundle of orthonormal frames on $(L,g)$, which corresponds to the Levi-Civita covariant derivative $\nabla^g$ on $(L,g)$. Then it is clear that if $j:R_L\rightarrow \hat R_L$ is the fibre bundle isomorphism given by setting for any $x\in L$ $j(e_1,\dots, e_n)=(e_1,\dots,e_n,Je_1,\dots,Je_n)$, where $(e_1,\dots,e_n)\in (R_L)_x, \ x\in L$ is an orthonormal basis in $T_xL$, that
\begin{equation}\label{leviZ}
Z:T\hat R_L\rightarrow \mathfrak{o}(n), \quad Z:=\hat i_*\circ Z^L\circ(j_*)^{-1},
\end{equation}
defines a well-defined connection on $\hat R_L$. Furthermore, $Z$ lifts to a connection $1$-form $\overline Z: T\hat P_L \rightarrow \hat {\mathfrak{o}}(n)$, so that the following diagram commutes, here we set $\hat {\mathfrak{o}}(n)=\rho_*^{-1}(\mathfrak{o}(n))$:
\[
\begin{CD}
T\hat P_L   @>{\overline Z}>> \hat {\mathfrak{o}}(n) \\
@VV{\hat f_{*}}V              @VV{\hat \rho_{*}}V \\
T\hat R_L  @>Z>>   		\mathfrak{o}(n)
\end{CD}
\]
Since $\rho_{*}$ is an isomorphism, we can actually define $\overline Z$ as $\overline Z= \rho_{*}^{-1}\circ
Z\circ f_{*}$ on $T\hat P_L$. Note that using the above, $i^*\mathcal{Q}^J_l, \ l\in \mathbb{N}_0$ can be written as
\[
i^*\mathcal{Q}^J_l=\hat P_L \times_{\hat \kappa}\mathcal{M}_l,
\]
where $\hat \kappa=\kappa|\hat O(n)$. For $s:U\subset L\rightarrow \hat R_L$ a local section in $\hat R_L$, let $\overline s:U\subset L\rightarrow P_L$ be a lift to $\hat P_L$. Then if $X\in \Gamma(TL)$, $\overline Z$ induces a covariant derivative in $\Gamma(i^*\mathcal{Q}^J)$ by setting for a local section $\varphi=[\overline s,u]$, where $u:U\rightarrow L^{2}(\mathbb{R}^{n})$
\[
\nabla_X\varphi=[\overline s,du(X)+\hat \kappa_*(\overline Z\circ\overline s_*(X))u].
\]
On the other hand, given a path $\gamma:[0,1]\rightarrow L$, $\gamma(0)\in U$, the horizontal lift $\gamma_p$ of $\gamma$ w.r.t. $\overline Z$ and a given $\gamma_p(0)=p\in (P_L)_{\gamma(0)}$ defines a map $\mathcal{P}^{\overline Z}_\gamma(t):(\hat P_L)_{\gamma(0)}\rightarrow (\hat P_L)_{\gamma(t)}$ by setting $\mathcal{P}^{\overline Z}_\gamma(t)(p)=\gamma_p(t)$. This defines the notion of parallel transport $\mathcal{P}^\nabla_\gamma(t): (i^*\mathcal{Q}_0^J)_{\gamma(0)}\rightarrow (i^*\mathcal{Q}^J_0)_{\gamma(t)}$ by setting 
\[
\mathcal{P}^\nabla_\gamma(t)[p,u]=[\mathcal{P}^{\overline Z}_\gamma(t)(p),u], \ u\in \mathcal{M}_0,\  t\in [0,1].
\]
Further, if $\varphi\in \Gamma(i^*\mathcal{Q}^J_0)$ and $\gamma'(0)=X$, then $\nabla_X\varphi=\frac{d}{dt}(\mathcal{P}^\nabla_\gamma(t)(\varphi(\gamma(t))))|_{t=0}$. Note that here, $u \in \mathcal{M}_0$ can be chosen to span the one-dimensional subspace $\mathcal{M}_0\subset L^2(\mathbb{R}^n)$ (see Lemma \ref{decomp}), hence $u(x)=e^{-\frac{(x,x)}{2}}, \ x\in \mathbb{R}^n$.\\ 
Now to prove Theorem \ref{theorem1}, let $x \in U\subset L$ and $s,\overline s$ as chosen above. Let $\gamma:[0,1]\rightarrow L$ be any closed smooth path in $L$ with basepoint $x$, that is $\gamma(0)=\gamma(1)=x$ and let $\mathcal{P}^{Z}_\gamma(t):(\hat R_L)_{\gamma(0)}\rightarrow (\hat R_L)_{\gamma(t)}$ be the parallel transport in $\hat R_L$ induced by $Z$. It follows that there is a unique smooth path
\[
S:[0,1]\rightarrow U(n)=Sp(2n)\cap O(2n) \ {\rm s.t.}\ \mathcal{P}^{Z}_\gamma(t)(s(x))=S(t).s(x),
\]
so that $S(0)=Id$, where we used the trivialization of $i^*R^J$ induced by the Euclidean connection $\nabla^0$ on $T\mathbb{R}^{2n}$ to compare $\mathcal{P}^{Z}_\gamma(t)(s(x))$ and $s(x)$ for any $t \in [0,1]$ in $i^*R^J$. Analogously we have a path
\[
\hat S:[0,1]\rightarrow \hat U(n) \ {\rm s.t.}\ \mathcal{P}^{\overline Z}_\gamma(t)(\overline s(x))=\hat S(t).\overline s(x),
\]
so that $\rho_2(\hat S(t))=S(t),\ t \in [0,1]$ and $\hat S(0)=Id_{Mp(2n)}$. Here again, we used the triviality of $i^*P^J$ induced by the Euclidean connection $\nabla^0$ on $T\mathbb{R}^{2n}$ to compare $\mathcal{P}^{\overline Z}_\gamma(t)(s(x))$ and $s(x)$ in $i^*P^J$. By the construction of $Z$ in (\ref{leviZ}), it follows that $S(1) \in \hat i(O(n))$, where $\hat i:O(n)\hookrightarrow U(n)$ is the inclusion defined in (\ref{orth}). So writing $S(1)=\left(\begin{smallmatrix}A&0\\0&A\end{smallmatrix}\right)$ for $A\in O(n)$ we have using (\ref{metalinear}) and setting $s(x)=p$
\begin{equation}\label{par}
\begin{split}
\mathcal{P}^{\nabla^g}_\gamma(1)[p,u]&=[\mathcal{P}^{\overline Z}_\gamma(1)p,u]= [p,\kappa(\hat S(1))u]=[p,(\kappa(g(A),m))u(y)]\\
&=[p,{\rm det}(A)^{\frac{1}{2}}u(A^{t}y)]=[p, i^m u(y)], \ y \in \mathbb{R}^n.
\end{split}
\end{equation}
Here, as above, $m \in \mathbb{Z}_4$ is determined by requiring ${\rm det}(A)^{\frac{1}{2}}=|{\rm det}(A)|i^m$ and we used that $|{\rm det}(A)|=1$. On the other hand, since $S(1)=\left(\begin{smallmatrix}0&A\\-A&0\end{smallmatrix}\right)\left(\begin{smallmatrix}0&-1\\1&0\end{smallmatrix}\right)$ we have the decomposition
\begin{equation}\label{metalineardec}
\kappa(\hat S(1))=\hat S_{W,m'}\hat S_{W',n}, \quad {\rm where} \ W=(0,A^{t},0), \ W'=(0,-Id_{R^n},0).
\end{equation}
Here $\hat S_{W,m}, \hat S_{W',n} \in \mathcal{U}(L^2(\mathbb{R}^n))$ are as referred to in Theorem \ref{sympdecomp} and $m'\in \mathbb{Z}_4$ is determined by using (\ref{qdecomp}) and noting that $P=Q=P'=Q'=0$. Then
\begin{equation}
\begin{split}\label{u0calc}
(\hat S_{W,m'}\hat S_{W',n}u)(x)&=\kappa(t(P))\kappa(g(A^{t}),m')\hat J\kappa(t(Q))\kappa(t(P'))\kappa(g(-Id_{\mathbb{R}^n}),n)\hat J\kappa(t(Q'))u(x)\\
&=i^{n-n}\kappa(g(A^{t}),m')u(x)=i^{m'} u(x), \ x\in \mathbb{R}^n,
\end{split}
\end{equation}
where we used $\hat J u=i^{-\frac{n}{2}}u$ (recall that $u(x)=e^{-\frac{(x,x)}{2}}$) and we fixed ${\rm det}(-Id_{\mathbb{R}^n})^{\frac{1}{2}}=i^n$, so by comparing with (\ref{par}) we infer that $m=m'\in \mathbb{Z}_4$. Now, by using notation from Theorem \ref{sympdecomp}, we have $\hat \mu(\hat S(1))=\hat \mu(\hat S_{W,m'})+\hat \mu(\hat S_{W',n})= 2m-n +2n-n=2m$. Comparing that to (\ref{masloveq2}) resp. (\ref{masloveq}) we deduce that
\begin{equation}\label{bla3}
2m=\hat \mu(\hat S(1)))=\mu_{L_0,2}(\hat S(1))=2\mu_{CLM}([L(1)],L(t)) \ {\rm mod}\ 8,
\end{equation}
where $L(t)=S(t)L_0$ and $L_0=\{0\}\times \mathbb{R}^{n}$ and we used that ${\rm dim}(L(0)\cap L(1))=n$. By definition $L(t)={\rm span}\{\sum_{i=1}^{2n}S_{ij}(t)e_i\}_{j=1}^{n}$, where ${\rm span}\{e_j\}=L_0$ denotes the standard basis of $\{0\}\times \mathbb{R}^n$. Hence defining $\tilde S\in Sp(2n)$ by $\tilde Se_j=s(x)_j$ if ${\rm span}_{j=1}^n s(x)_j=T_xL$ implies that $\tilde S L(t)={\rm span}\{\sum_{i=1}^{2n}S_{ij}(t)s(x)_i\}_{j=1}^{2n}$. This means that $\tilde S L(t)=i_*(T_{\gamma(t)}L)$, if $i:L\hookrightarrow M$ is the inclusion. Using (\ref{bla3}) and the invariance of $\mu_{CLM}$ under symplectic mappings we arrive at
\[
m=\mu_{CLM}(\tilde S [L(1)],\tilde S L(t))=\mu_{CLM}(i_*(T_{x}L),i_*(T_{\gamma(t)}L)),
\]
which is by (\ref{par}) exactly the content of Theorem \ref{theorem1}.\\ 
Now to prove Corollary \ref{cor1}, note that if ${\rm Hol}_p(\overline Z)$ is the holonomy group of $p \in (\hat P_L)_x,\ x \in L$, that is
\[
{\rm Hol}_p(\overline Z)=\{g \in \hat O(n): \exists \gamma:[0,1]\rightarrow L,\ \gamma(0)=\gamma(1)=x,\ g.\gamma_p(0)=\gamma_p(1)\},
\]
then there is an identification 
\[
{\rm Par}^{\nabla^g}(i^*Q):=\{\phi \in \Gamma(i^*Q_0):\nabla\phi=0\}\simeq\{u \in \mathcal{M}_0:\kappa({\rm Hol}_p(\overline Z)u=u\}
\]
for any $p\in \hat P_L$. Since we have shown above that $\kappa({\rm Hol}_p(\overline Z))=\mathbb{Z}_4\subset S^1$ if $S^1\subset \mathbb{C}$ acts by multiplication on $\mathcal{M}_0$ and $\mathcal{M}_0$ has complex dimension one, we infer by the homotopy invariance of $\mu_{CLM}$ that ${\rm Par}^{\nabla^g}(i^*Q)=1$ if and only if $\mu_{CLM}(i_*(T_{x}L),i_*(T_{\gamma(t)}L))=0 \mod 4$ for any $\gamma\in \pi_1(L)$, which proves the Corollary.\\
To prove Theorem \ref{theorem2}, choose $p \in (\hat P_L)_x,\ x \in L$ and extend $\delta_p \in i^*(\mathcal{Q}^J)_x'=i^*\mathcal{Q}_x'$ using the connection $\nabla^0$ on $i^*\mathcal{Q}'$ induced by the canonical flat connection of $i^*(T\mathbb{R}^{2n})$ to an element $\delta_p\in \Gamma(i^*\mathcal{Q}')$. 
Now write $\delta_p(x)=[p, \delta(0)]$ and consider the parallel transport $\mathcal{P}^{\nabla^g}_\gamma$ along $\gamma:[0,1]\rightarrow L, \ \gamma(0)=x, \gamma(1)=y$ for $y\in L$ induced by the Levi-Civita-connection $\nabla^g$ of $L$ in $i^*\mathcal{Q}'$. Then
\begin{equation}\label{pararg}
\mathcal{P}^{\nabla^g}_\gamma(1)[p,\delta(0)]=[\mathcal{P}^{\overline Z}_\gamma(1)p,\delta(0)]= [p,\kappa(\hat S(1))\delta(0)]
\end{equation}
where as above, $\hat S:[0,1]\rightarrow \hat U(n)$ is determined by the requirement that $\mathcal{P}^{\overline Z}_\gamma(t)(p)=\hat S(t).p$. Here again we use the trivialization of $i^*P^J$ induced by $\nabla^0$ to consider $p$ as an element of $(i^*\mathcal{Q}')_{\gamma(t)}$ for any $t \in [0,1]$. Then $\hat S$ lifts the path $S:[0,1]\rightarrow U(n)$ determined by $\mathcal{P}^{Z}_\gamma(t)(r)=S(t).r$, where $r =\hat f(p)\in (\hat R_L)_x$. Assume now that $i_*(T_xL)\cap i_*(T_y L)=0$ in $\mathbb{R}^{2n}$, then since 
\begin{equation}\label{matrixdecomp}
S(1)=\left(\begin{smallmatrix}A
&-B\\B&A\end{smallmatrix} \right),  {\rm det}(B)\neq 0,
\end{equation}
where $A,B \in M(n,\mathbb{R}), \ A^tA+B^tB=I$ and $A^tB$ symmetric, we can write following Theorem \ref{sympdecomp} resp. Gosson \cite{gosson2} (Chapter 7.1)
\[
\kappa(\hat S(1))=\hat S_{W,m} \quad {\rm where} \ W=(P,L,Q)=(-AB^{-1},-B^{-1},-B^{-1}A), 
\]
and explicitly
\begin{equation}\label{proofdec}
\hat S_{W,m}=\kappa(t(-AB^{-1}))\kappa(g(-B^{-1}),m)\hat J\kappa(t(-B^{-1}A)).
\end{equation}
So we get by applying (\ref{metadual}) and (\ref{quadraticfourier}) and by setting $u_\epsilon=\frac{1}{\epsilon^n}u(\frac{x}{\epsilon})$ with $u$ as in (\ref{u0calc}) and $f \in \mathcal{S}(\mathbb{R}^n)$
\begin{equation}\label{eqbla}
\begin{split}
(\hat S_{W',m'}\delta(0))(f)=&{\rm lim}_{\epsilon\rightarrow 0}\int_{\mathbb{R}^n}\overline{\hat S_{W',m'}u_\epsilon} f dx\\
=&(\frac{1}{2\pi})^{n/2}i^{-m'+n/2}|{\rm det}(-B^{-1})|^{\frac{1}{2}} \int_{\mathbb{R}^n} e^{\pi i\langle Px,x\rangle}f(x)dx.
\end{split}
\end{equation}
Now note that the definition $\hat \mu(\hat S_{W,m})=2m-n$ and the formula (\ref{decomp3}) for $\hat \mu:Mp(n)\rightarrow \mathbb{Z}_4$ are compatible in the following sense. If $\hat S \in Mp(2n)$ and $\rho_2(\hat S)=S$ with $S=\left(\begin{smallmatrix}A &B\\C&D\end{smallmatrix} \right),\  {\rm det}(B)\neq 0$, then $\kappa(\hat S)=\hat S_{W,m}$ for some $W=(P,L,Q),\ m\in \mathbb{Z}_4$ and $\hat \mu(\hat S)=\hat \mu(\hat S_{W,m})=2m-n$. This follows from Theorem 7.22 (i) in Gosson's book (\cite{gosson2}). So, combining (\ref{eqbla}) with (\ref{pararg}), setting $c(y):=(\frac{1}{2\pi})^{n/2}|{\rm det}(-B^{-1})|^{\frac{1}{2}}$ and using (\ref{masloveq}) together with $2m'-n=\hat\mu(\hat S_{W',m'})$, we arrive at the following Theorem:
\begin{theorem}\label{theorem3}
Let $\gamma:[0,1]\rightarrow L$ denote a smooth path connecting $x,y \in L$ and assume that $i_*(T_xL)\cap i_*(T_yL)=0$ in $\mathbb{R}^{2n}$. Then $\mathcal{P}^{\nabla^{g}}_{\gamma}\delta_p(x)\in (i^*\mathcal{Q}')_y$ and we have
\begin{equation}
\mathcal{P}^{\nabla^{g}}_{\gamma}\delta_p(x)=c(y)e^{-i\frac{\pi}{2}\mu_{CLM}([i_{*}T_{y}L],[i_{*}\gamma])}e^{-\pi i\langle AB^{-1}\cdot,\cdot\rangle}\mathbf{1}_p(y),
\end{equation}
for $0 < c(y) \in \mathbb{R}$, $L(x)=i_*(T_xL)$ and $L(y)=i_*(T_xL)$, respectively. Here $A,B \in M(n,\mathbb{R})$ are as in (\ref{matrixdecomp}).
\end{theorem}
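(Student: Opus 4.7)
The plan is to run exactly the same calculation that proved Theorem \ref{theorem1}, but replacing the Gaussian $u\in\mathcal{M}_0$ by the tempered distribution $\delta(0)\in\mathcal{S}'(\mathbb{R}^n)$. As before, choose $p\in(\hat P_L)_x$ with $r=\hat f(p)\in(\hat R_L)_x$, lift the Levi-Civita parallel transport along $\gamma$ to a path $\hat S:[0,1]\to\hat U(n)$ with $\hat S(0)=\mathrm{Id}$, and use the trivialisations of $i^*P^J$ and $i^*\mathcal{Q}'$ induced by the Euclidean connection $\nabla^0$ to rewrite the parallel transport as in (\ref{pararg}):
\[
\mathcal{P}^{\nabla^{g}}_{\gamma}\delta_p(x)=[p,\hat\kappa(\hat S(1))\delta(0)].
\]
Everything then reduces to computing $\hat\kappa(\hat S(1))\delta(0)\in\mathcal{S}'(\mathbb{R}^n)$ via (\ref{metadual}) and identifying the resulting tempered distribution in terms of $\mathbf{1}_p(y)$ and the CLM-index.

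The transversality hypothesis $i_*(T_xL)\cap i_*(T_yL)=0$ enters at the next step: since $S(1)\in U(n)\subset Sp(2n)$ has the block form (\ref{matrixdecomp}) and transversality is equivalent to $\det B\neq 0$, Theorem \ref{sympdecomp} together with the formula (\ref{proofdec}) shows that $\kappa(\hat S(1))$ is a single quadratic Fourier transform $\hat S_{W,m}$ with generating triple $W=(-AB^{-1},-B^{-1},-B^{-1}A)$ and some $m\in\mathbb{Z}_4$ fixed by the lift $\hat S(1)$.

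The core computation is then (\ref{eqbla}): approximate $\delta(0)$ in $\mathcal{S}'(\mathbb{R}^n)$ by the normalised Gaussians $u_\epsilon(x)=\epsilon^{-n}u(x/\epsilon)$ and use the explicit factorisation (\ref{proofdec}). The two translations $\kappa(t(Q))$ and $\kappa(t(P))$ multiply by exponentials, the Fourier transform $\hat J$ acts on the (image of the) Gaussian as the phase $i^{-n/2}$, and the metalinear factor $\kappa(g(-B^{-1}),m)$ contributes $i^m|\det B^{-1}|^{1/2}$. Passing to the limit $\epsilon\to 0$ and plugging into the dual pairing yields a distribution of the form
\[
c(y)\,i^{-m+n/2}\,e^{-\pi i\langle AB^{-1}x,x\rangle},\qquad c(y)=(2\pi)^{-n/2}|\det(B^{-1})|^{1/2}>0,
\]
which under the equivalence $[p,1]=\mathbf{1}_p(y)$ is exactly $c(y)\,i^{-m+n/2}\,e^{-\pi i\langle AB^{-1}\cdot,\cdot\rangle}\,\mathbf{1}_p(y)$.

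Finally one identifies the integer $-m+n/2$ modulo $4$ with the claimed Maslov phase. By Theorem \ref{sympdecomp} one has $\hat\mu(\hat S(1))=\hat\mu(\hat S_{W,m})=2m-n$, and by (\ref{masloveq2}) this equals $\mu_{L_0,2}(\hat S(1))$, which under the corollary to Lemma \ref{modmaslov} equals $2\mu_{CLM}([L(1)],L(t))$ modulo $8$ (the dimension defect $n-\dim(L(0)\cap L(1))$ vanishes by transversality). The change-of-frame argument used at the end of the proof of Theorem \ref{theorem1}, namely applying the unique $\tilde S\in Sp(2n)$ sending the standard basis of $L_0$ to $s(x)$ and invoking symplectic invariance of $\mu_{CLM}$, converts $\mu_{CLM}([L(1)],L(t))$ into $\mu_{CLM}([i_*T_yL],[i_*\gamma])$. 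Hence $i^{-m+n/2}=e^{-i\frac{\pi}{2}\mu_{CLM}([i_*T_yL],[i_*\gamma])}$, producing the stated formula.

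The main obstacle is step three: the action of $\kappa$ on $\delta(0)$ is only defined as a dual action on $\mathcal{S}'(\mathbb{R}^n)$, so the oscillatory integral representation (\ref{quadraticfourier}) cannot be applied to $\delta(0)$ directly. One must justify the Gaussian regularisation $u_\epsilon\to\delta(0)$ in the weak-$*$ topology, verify that the resulting limits of products of translations, dilations and Fourier transforms commute with the limit, and carefully track the three independent phase choices (the fixed root of $i^{n/2}$, the root $\det^{1/2}B$ fixing the lift, and the overall normalisation of $u_\epsilon$) so that the final integer $m$ coincides on the nose with the one appearing in the index $\hat\mu$. Once this bookkeeping is done, everything else is a reshuffling of formulas from Section \ref{maslov} and Section \ref{metaplect}.
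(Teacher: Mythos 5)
Your outline tracks the paper's own proof closely -- same reduction to $[p,\hat\kappa(\hat S(1))\delta(0)]$ via the $\nabla^0$-trivialisation, same observation that transversality forces $\det B\neq 0$ so that $\kappa(\hat S(1))$ is a single quadratic Fourier transform $\hat S_{W,m}$ with $W=(-AB^{-1},-B^{-1},-B^{-1}A)$, same Gaussian regularisation, and same final identification of the phase via $\hat\mu(\hat S_{W,m})=2m-n$, (\ref{masloveq2}), (\ref{masloveq}) and the change-of-frame argument from the proof of Theorem \ref{theorem1}. So structurally this is a faithful reconstruction.

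However, there is a concrete error in your final step, and it is not cosmetic. You write that the dimension defect $n-\dim(L(0)\cap L(1))$ in (\ref{masloveq}) ``vanishes by transversality''. It does not: transversality is precisely the condition $\dim(L(0)\cap L(1))=0$, so the defect equals $n$, not $0$. The defect vanishes in the \emph{opposite} situation $L(x)=L(y)$ (i.e.\ $\dim(L(0)\cap L(1))=n$), which is the closed-loop/coincident case used in (\ref{bla3}) and in the proof of Theorem \ref{theorem1}; there the paper explicitly remarks that ``${\rm dim}(L(0)\cap L(1))=n$''. Carried through, your assertion gives $2m-n=2\mu_{CLM}\ ({\rm mod}\ 8)$ hence $m=\mu_{CLM}+n/2$, whereas (\ref{masloveq}) with the correct defect $n$ gives $2m-n=2\mu_{CLM}+n\ ({\rm mod}\ 8)$, i.e.\ $m=\mu_{CLM}+n$. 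These differ by $n/2\ ({\rm mod}\ 4)$, so the phase $i^{-m+n/2}$ you compute in step three does not reduce to $i^{-\mu_{CLM}}$ under the corrected bookkeeping unless $n\equiv 0\ ({\rm mod}\ 8)$. The fact that your (erroneous) premise lands exactly on the stated formula is a warning sign: either you have a second, compensating slip somewhere in step three (the obvious suspects are not distinguishing the action of $\kappa(\hat S(1))$ from the dual/adjoint action $\hat\kappa(\hat S(1))$ entering via (\ref{metadual}), and the $(2\pi)^{n/2}$ normalisation implicit in taking $u_\epsilon\to\delta(0)$), or the identification of the phase needs to be rerun carefully with defect $n$. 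In any case the sentence ``the dimension defect $\dots$ vanishes by transversality'' must be replaced by the correct statement that it equals $n$, and the algebra following it redone.

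A smaller imprecision, worth flagging: you say ``the Fourier transform $\hat J$ acts on the (image of the) Gaussian as the phase $i^{-n/2}$''. That identity $\hat Ju=i^{-n/2}u$ holds for the ground state $u(x)=e^{-|x|^2/2}$, not for the rescaled $u_\epsilon$, whose Fourier transform is a widening Gaussian converging to a constant function. The $i^{-n/2}$ that survives in the limit comes from the prefactor $(\tfrac{1}{2\pi i})^{n/2}$ in (\ref{quadraticfourier}) combined with the normalisation of $u_\epsilon$, not from $\hat J$ acting diagonally on $u_\epsilon$. Your result happens to be right, but the reason given for it is not.
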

Now $A=0$ in the above is equivalent to $i_*(T_xL)\perp i_*(T_y L)$ which implies formula (\ref{transversal}) in Theorem \ref{theorem2}. To examine the case $i_*(T_xL)=i_*(T_y L)$, note that in this case $\kappa(\hat S(1))$ in (\ref{pararg}) decomposes as in (\ref{metalineardec}) and consequently
\begin{equation}\label{othercase4}
\begin{split}
\kappa (\hat S(1))\delta(0)&=(\hat S_{W,m'}\hat S_{W',n})\delta(0)\\
&=\kappa(g(A^{t}),m')\hat J\kappa(g(-Id_{\mathbb{R}^n}),n)\hat J \delta(0)\\
&=i^{n-n}\kappa(g(A^{t}),m')\delta(0)=i^{-m'} \delta(0). 
\end{split}
\end{equation}
Since $\hat \mu(\hat S(1))=\hat \mu(\hat S_{W,m'})+\hat \mu(\hat S_{W',n})=2m'-n+2n-n=2m'$ we can use again (\ref{masloveq}) to arrive at (\ref{othercase}).\\
To prove (\ref{transversal2}) in Corollary \ref{cor2}, note first that if ${\rm dim}\ L(x)\cap L(y)=n$, we can apply exactly the same arguments as in the proof of Theorem \ref{theorem1} above (using the trivialization of $i^*P^J$ induced by $\nabla^0$) to arrive at (\ref{transversal2}). Assume now that $L(x)\cap L(y)=0$, then if $\hat S(1) \in Mp(2n)$ is as in (\ref{pararg}) and since $\mathcal{M}_0\subset L^2(\mathbb{R}^n)$ is spanned by $u(x)=e^{-\frac{(x,x)}{2}}, \ x\in \mathbb{R}^n$, we have with $m' \in \mathbb{Z}_4$ as in (\ref{eqbla}) by what was shown in the proof of Theorem \ref{theorem3}
\begin{equation}\label{transversegen}
\begin{split}
\delta(0)(\kappa(\hat S(1))u)&=\overline{(\kappa(\hat S(1))^*\delta(0))(u)}=(\tilde c(y)i^{m'-n/2}e^{-\pi i\langle (B^t)^{-1}A^tx,x\rangle}\mathbf{1})(u)\\
&=\tilde c(y)i^{m'-n/2}\int_{\mathbb{R}^n}e^{-\pi i\langle (B^t)^{-1}A^tx,x\rangle}u(x)dx
\end{split}
\end{equation}
Here $\tilde c(y)\in \mathbb{R}^+$ and we used that $\hat S_{W,m'}^*=\hat S_{\tilde W,n-m'}$ for some quadratic form $\tilde W$ (cf. \cite{gosson2}, Prop. 7.2). For $L(x)\perp L(y)$ we have $A=0$ and the integral in (\ref{transversegen}) equals ${2\pi}^{n/2}$. Since $\hat S(1) \in \hat U(n)$, we have $\kappa(\hat S(1))u=\hat c u$ for some $\hat c \in U(1)$ so we see that $\tilde c(y){2\pi}^{n/2}=1$ and $\hat c=i^{m'-n/2}$, which gives (\ref{transversal2}) by the arguments given below (\ref{eqbla}). Finally (\ref{othercase2}) follows by noting that $\kappa(\hat U(n))(\mathcal{M}_l)\subset \mathcal{M}_l,\ l \in \mathbb{N}_0$ (see \ref{decomp}), using 
\[
\delta(0)(\kappa(\hat S(1))u)=\overline{\kappa(\hat S(1))^*\delta(0))\overline u}=i^{m'}u(0),
\]
where $u \in \mathcal{M}_l$, $\hat S(1), \ m'\in\mathbb{Z}_4$ are as in (\ref{othercase4}) and finally using (\ref{masloveq}) again.

\end{document}